\documentclass[11pt,reqno]{amsart}
\usepackage[margin=1in]{geometry}
\usepackage{amsmath,amssymb,amsthm,graphicx,amsxtra, setspace}
\usepackage[utf8]{inputenc}
\usepackage{mathrsfs}
\usepackage{hyperref}
\usepackage{upgreek}
\usepackage{mathtools}
\usepackage[dvipsnames]{xcolor}
\usepackage[mathcal]{euscript}
\usepackage{amsmath,units}
\usepackage{pgfplots}
\usepackage{tikz}
\usepackage{verbatim}
\allowdisplaybreaks
\usepackage{dsfont}
\usepackage{fontenc}
\usepackage{textcomp}
\usepackage{marvosym}
\usepackage{eurosym}
\usepackage{upgreek}
\usepackage[pagewise]{lineno}

\DeclareMathAlphabet{\mathpzc}{OT1}{pzc}{m}{it}

\usepackage[cyr]{aeguill}

\colorlet{darkblue}{blue!50!black}

\hypersetup{
	colorlinks,%
	citecolor=blue,%
	filecolor=red,%
	linkcolor=red,%
	urlcolor=blue,%
	pdfnewwindow=true,%
	pdfstartview={FitH}
}
\allowdisplaybreaks

\newtheorem{theorem}{Theorem}[section]

\newtheorem{definition}[theorem]{Definition}

\newtheorem{remark}[theorem]{Remark}

\renewcommand{\d}{\/\mathrm{d}\/}

\def\w{\textbf{W}^{\varepsilon}_{{\theta}^{\varepsilon}}}

\def\L{\mathbb{L}}
\def\wi{\widetilde}

\def\A{\mathrm{A}}

\def\C{\mathrm{C}}

\def\B{\mathrm{B}}

\def\E{\mathbb{E}}
\def\D{\mathrm{D}}

\def\y{\boldsymbol{y}}

\def\x{\boldsymbol{x}}

\def\bv{\boldsymbol{v}}
\def\v{\boldsymbol{v}}
\def\w{\boldsymbol{w}}
\def\W{\mathrm{W}}

\def\f{\boldsymbol{f}}
\def\V{\mathbb{V}}

\def\u{\boldsymbol{u}}

\def\H{\mathbb{H}}
\def\n{\boldsymbol{n}}

\newcommand{\R}{\mathbb{R}}

\renewcommand{\d}{\/\mathrm{d}\/}

\let\originalleft\left
\let\originalright\right
\renewcommand{\left}{\mathopen{}\mathclose\bgroup\originalleft}
\renewcommand{\right}{\aftergroup\egroup\originalright}

\newcommand{\vertiii}[1]{{\left\vert\kern-0.25ex\left\vert\kern-0.25ex\left\vert #1 
		\right\vert\kern-0.25ex\right\vert\kern-0.25ex\right\vert}}

\newcommand{\Addresses}{{
		\footnote{
			
			\noindent \textsuperscript{1}Department of Mathematics, Indian Institute of Technology Roorkee-IIT Roorkee,
			Haridwar Highway, Roorkee, Uttarakhand 247667, INDIA.\par\nopagebreak
			\noindent  \textit{e-mail:} \texttt{manilfma@iitr.ac.in, maniltmohan@gmail.com.}
			
			\noindent \textsuperscript{*}Corresponding author.

			\textit{Key words:} convective Brinkman-Forchheimer equations, global weak solutions, time-periodic solution, Brouwer’s fixed point theorem. 
			
			Mathematics Subject Classification (2010): 76D03, 35Q30, 35B10.

}}}

\begin{document}

	\title[Existence and uniqueness  of time-periodic solutions of 2D and 3D  CBFeD equations]{Existence and uniqueness  of time-periodic solutions of the 2D and 3D convective Brinkman-Forchheimer extended Darcy equations\Addresses}
	\author[M. T. Mohan]
	{Manil T. Mohan\textsuperscript{1*}}

	\maketitle
	
	\begin{abstract}
		 In this work, we investigate the existence and uniqueness of solutions to the following 2D and 3D convective Brinkman-Forchheimer extended Darcy equations defined on a bounded smooth domain $\Omega\subset\mathbb{R}^d$, $d\in\{2,3\}$,
		 	\begin{align*}
		 	\frac{\partial\boldsymbol{v}}{\partial t}-\mu \Delta\boldsymbol{v}+(\boldsymbol{v}\cdot\nabla)\boldsymbol{v}+\alpha\boldsymbol{v}+\beta\vert \boldsymbol{v}\vert^{r-1}\boldsymbol{v}+\gamma\vert \boldsymbol{v}\vert ^{q-1}\boldsymbol{v}+\nabla p=\boldsymbol{g},\ \nabla\cdot\boldsymbol{v}=0, 
		 \end{align*}
		 where $\mu,\alpha,\beta>0$, $\gamma\in\mathbb{R}$, $r,q\in[1,\infty)$ with $r>q\geq 1$ and $\boldsymbol{g}$ is an external forcing term. For $r \geq 1 $, under periodic forcing, we establish the \emph{existence of time-periodic global weak solutions} to the system by employing \emph{Faedo-Galerkin approximations}, together with the \emph{Banach-Alaoglu theorem}, the \emph{Aubin-Lions-Simon compactness lemma}, and the \emph{Lions-Magenes lemma}. The \emph{existence of periodic solutions} for the Faedo-Galerkin approximated problem is obtained via \emph{Brouwer’s fixed point theorem}. In the \emph{supercritical} case $( r > 3 )$ and the \emph{critical} case ($ r = 3$),  we prove the \emph{uniqueness of the global weak solution} without imposing any smallness condition on the external forcing. This constitutes a new result compared to the classical 2D  Navier-Stokes equations with periodic inputs, for which the \emph{uniqueness of strong solutions} typically requires \emph{smallness assumptions} on the external force.
	\end{abstract}

	
	\section{Introduction}\label{sec1}\setcounter{equation}{0}  
	
	Mathematical modeling and analysis of fluid dynamics are of fundamental interest, both for understanding fluid-related phenomena and for practical applications. In particular, the study of heat and fluid flow in porous media plays a crucial role in a wide range of scientific and engineering disciplines. Many models describing flow in porous media are based on \emph{Darcy’s law}, an empirical relation that puts forward a linear connection between the flow rate and the pressure gradient:
	$$\bv_d = -\frac{k}{\nu} \nabla p,$$
	where $\bv_d$ is the Darcy velocity, $k$ denotes the permeability, $\nu>0$ is the dynamic viscosity, and $p$ is the pressure of the fluid (cf. \cite{PAM}). However, numerous studies show that at higher flow rates, the linear relation predicted by Darcy’s law no longer holds. For example, in applications such as petroleum reservoirs and radial flow patterns, the flow exhibits nonlinear behavior. To account for this, \emph{Forchheimer} introduced a quadratic correction to the Darcy law, leading to the \emph{Darcy-Forchheimer law}:
	$$
	\nabla p = -\frac{\nu}{k}\bv_f - \gamma \rho_f |\bv_f| \bv_f,
	$$
	where $\bv_f$ is the Forchheimer velocity, $\gamma>0$ is the Forchheimer coefficient, and $\rho_f$ is the fluid density (cf. \cite{PAM}). Building on the Darcy-Forchheimer law, the \emph{convective Brinkman-Forchheimer extended Darcy model} was derived to describe fluid flow with thermal dispersion in porous media. This formulation arises naturally through the method of \emph{volume averaging}, which considers the deviations of velocity and temperature within the pore spaces:
		\begin{equation*}
			\left\{
			\begin{aligned}
				\frac{\partial \bv_f}{\partial t}- \mu\Delta\bv_f+(\bv_f\cdot\nabla)\bv_f+a_0\bv_f+a_1|\bv_f|\bv_f+\nabla p&=\boldsymbol{g}, \\ {\nabla\cdot\bv_f}&=0.
			\end{aligned}
			\right.
		\end{equation*}	
	From a mathematical perspective, the quadratic nonlinearity in the Forchheimer equation can be further generalized to include higher-order nonlinear terms. In particular, at high flow rates through porous media, the most relevant and practical case is captured by the \emph{linear-cubic Darcy-Forchheimer law} (cf. \cite{PAM}):
	$$
	\nabla p = -\frac{\nu}{k} \bv_f - \gamma \rho_f |\bv_f|^2 \bv_f.
$$
	Incorporating these nonlinear corrections to Darcy’s law,  the resulting \emph{convective Brinkman-Forchheimer extended Darcy model} reads as
	\begin{equation*}
		\left\{
		\begin{aligned}
			\frac{\partial \bv_f}{\partial t}-\mu\Delta\bv_f+ (\bv_f\cdot\nabla)\bv_f+a_0\bv_f+a_1|\bv_f|\bv_f+a_2|\bv_f|^2\bv_f+\nabla p&=\boldsymbol{g}, \\ \nabla\cdot\bv_f&=0.
		\end{aligned}
		\right.
	\end{equation*}	
	In this work, we study a further generalization of the convective Brinkman-Forchheimer extended Darcy equations by including a \emph{pumping term} that exhibits a similar nonlinear structure but with the opposite sign. 
	\subsection{The model}
Let $\Omega\subset\mathbb{R}^d$, $d\in\{2,3\}$ be a bounded domain 	with smooth boundary $\partial\Omega$ and let $T>0$ be fixed. The convective Brinkman-Forchheimer extended Darcy (CBFeD) equations model the motion of incompressible fluid flow in a saturated porous medium and are expressed as follows:
	\begin{equation}\label{eqn-model}
		\left\{
		\begin{aligned}
			\frac{\partial \bv}{\partial t}-\mu\Delta\bv+(\bv\cdot\nabla)\bv+\alpha\bv+\beta|\bv|^{r-1}\bv +\gamma|\bv|^{q-1}\bv+\nabla p&=\boldsymbol{g}, \ \text{ in } \ \Omega\times(0,T), \\ \nabla\cdot\bv&=0, \ \text{ in } \ \Omega\times[0,T], \\
			\bv&=\boldsymbol{0}\ \text{ in }\ \partial\Omega\times[0,T],\\
			\bv(0)&=\bv_0 \ \text{ in } \ \Omega,
		\end{aligned}
		\right.
	\end{equation}	
	where $\bv(x,t):\Omega\times[0,T]\to\R^d$ represents the velocity field at time $t$ and position $x$, $p(x,t):\Omega\times[0,T]\to\R$ denotes the pressure,  $\boldsymbol{g}(x,t):\Omega\times\mathbb{R}\to\R^d$ is an external  forcing term such that $\boldsymbol{g}(x,t+T)=\boldsymbol{g}(x,t)$ for all $t\in\mathbb{R}$. 
	The constant $\mu > 0$ represents the \emph{Brinkman coefficient} (effective viscosity), while the constants $\alpha$ and $\beta$ arise from the Darcy-Forchheimer law and are referred to as the \emph{Darcy coefficient} (related to the permeability of the porous medium) and the \emph{Forchheimer coefficient} (proportional to the porosity of the material), respectively. The nonlinear term $\gamma |\boldsymbol{v}|^{q-1} \boldsymbol{v}$ in \eqref{eqn-model} acts as \emph{damping} when $\gamma > 0$ and as \emph{pumping} when $\gamma < 0$. The parameter $r \in [1, \infty)$ is known as the \emph{absorption exponent}, with $r = 3$ corresponding to the \emph{critical exponent}. For $\gamma = 0$, the system reduces to the \emph{convective Brinkman-Forchheimer (CBF) equations} (\cite{KWH}). The critical homogeneous CBF equations (i.e., \eqref{eqn-model} with $r = 3$ and $\boldsymbol{g} = \mathbf{0}$ exhibit the same scaling as the \emph{Navier-Stokes equations (NSE)} only when $\alpha = 0$ (\cite{KWH}).
	The case $r < 3$ is referred to as \emph{subcritical}, whereas $r > 3$ corresponds to \emph{supercritical} or fast-growing nonlinearities. This model is particularly relevant when the flow velocity is too large for Darcy’s law to remain valid and when the porosity is not too small (\cite{PAM}). If one sets $\alpha = \beta = \gamma = 0$, the system reduces to the \emph{classical NSE}, and if $\alpha, \beta, \gamma > 0$, it can be interpreted as a \emph{damped NSE}.

	\subsection{Literature survey}
	The problem described in \eqref{eqn-model} is of considerable interest, not only because of its physical relevance but also from a mathematical standpoint. According to the classical results of \emph{Leray and Hopf}, the three-dimensional Navier-Stokes equations (NSE) admit at least one weak solution that satisfies the energy inequality. However, the \emph{uniqueness} of such weak solutions and the global existence of strong solutions remain one of the central open questions in mathematical fluid mechanics. To overcome these difficulties, several researchers have proposed various modifications of the 3D NSE (see, e.g., \cite{ZCQJ,KT2,ZZXW}). In particular, the authors of \cite{SNA1,SNA} introduced a modified version of the NSE incorporating an \emph{absorption term} of the form $\beta |\bv|^{r-1}\bv$ with $r \ge 1$, and established the \emph{existence of weak solutions} in any spatial dimension $d \ge 2$, along with \emph{uniqueness} results in two dimensions. More generally, equations that extend the classical NSE by including a \emph{damping term} of the form $\alpha \bv + \beta |\bv|^{r-1}\bv$ are commonly known as the \emph{convective Brinkman-Forchheimer (CBF) equations} (cf. \cite{KWH}). Furthermore, in \cite{PAM}, the authors examined a variant of the 3D NSE featuring both a \emph{damping} term $\beta |\bv|^{r-1}\bv$ and a \emph{pumping} term $\gamma |\bv|^{q-1}\bv$ (excluding the linear damping $\alpha \bv$), proving the \emph{existence of weak solutions} for $r > q$ and \emph{uniqueness} for $r > 3$. Analogous to the classical 3D NSE, the global existence of strong solutions to  the three-dimensional CBFeD equations remains open for $r \in [1,3)$ (for any $\beta, \mu > 0$) and for $r = 3$ when {$2\beta\mu < 1$}.

	The paper \cite{PAM} introduced and analyzed a continuous data assimilation algorithm for the three-dimensional Brinkman-Forchheimer-extended Darcy (3D BFeD) model, establishing the existence and uniqueness of solutions and proving the convergence of the proposed feedback-based assimilation scheme. The Brinkman-Forchheimer equations with rapidly growing nonlinearities were investigated in \cite{KT2}, where the authors demonstrated the existence of regular dissipative solutions and global attractors for the system \eqref{eqn-model} with $\gamma = 0$ in three dimensions for $r > 3$. This result ensures the existence of global weak solutions to the three-dimensional convective Brinkman-Forchheimer (CBF) equations, in the Leray-Hopf sense, satisfying the energy equality. The corresponding result for the critical case $(r = 3)$ was obtained in \cite{CLF}, where it was shown that all weak solutions of the critical CBF equations with $4\beta\mu \ge 1$, posed on a bounded domain in $\mathbb{R}^3$, satisfy the energy equality. In \cite{MTM7}, the monotonicity and hemicontinuity properties of the linear and nonlinear operators were analyzed, and, by employing the classical Minty-Browder technique, the authors established the existence and uniqueness of a global weak solution, again in the Leray-Hopf sense, satisfying the energy equality for the three-dimensional CBF equations with $r \ge 3$ (and $2\beta\mu \ge 1$ when $r = 3$). Furthermore, \cite{SGKKMTM} investigated the exponential stabilization of the controlled CBFeD system on a $d$-dimensional torus, employing both finite- and infinite-dimensional feedback controllers based on the theory of $m$-accretive operators and control design principles.

	The Navier-Stokes equations subject to time-periodic  external inputs have been studied from several complementary angles in the literature: for moderate forcing one can construct time-periodic weak and strong solutions (and prove uniqueness under smallness conditions), while for more general forcing the focus shifts to long-time statistical behavior, attractors and regularity of the solution set. In particular, classical functional-analytic treatments show existence of time-periodic solutions under smallness or suitable decay assumptions on the periodic body force and on domain/exterior-flow geometry (existence/uniqueness results,  \cite{GPG-13,HK-97,MK-14,DL-90,HMo,TN-25,ANFM-25,RAJPN-00,GPr,RS-94-1}; periodic exterior-domain constructions \cite{GPAL-06,GPG-22,PMMP-99,RS-94}). Classic approaches employ Galerkin approximations, fixed-point theorems (e.g., via the Poincar\'e map or Rothe method) to show existence of a weak or strong solution that is periodic in time and shares the period of the forcing. For example, Lauerov\'a \cite{DL-90} used the Rothe and Galerkin methods to establish periodic solutions for the variational form of Navier-Stokes. Kato \cite{HK-97} established the existence of time-periodic solutions to the NSE under appropriate smallness and regularity conditions on the periodic external force. Morimoto \cite{HMo} studied time-periodic solutions of the NSE with nonhomogeneous boundary conditions, employing the Galerkin approximation method to establish existence results under appropriate assumptions. Kyed \cite{MK-14} proved in the whole space setting that for small time-periodic data there is a strong time-periodic solution and uniqueness in a large class of weak solutions, together with regularity properties. Similarly, in the exterior-domain (flow around a body) setting,  Galdi  and Silvestre \cite{GPAL-06} proved existence of time-periodic motions of a fluid when the body undergoes a rigid periodic motion and a matching time-periodic body‐force is present. In the rotational Navier-Stokes problem with Coriolis force, Kozono et. a.l. \cite{HKYMRT-14} proved unique existence of time‐periodic solutions under small forcing and their asymptotic stability. To the best of our knowledge, time-periodic problems for both the CBF and CBFeD equations have not yet been investigated in the existing literature.

%

	\subsection{Novelties of the work}

In this work, we investigate the \emph{global solvability} of the CBFeD equations with periodic inputs. For $r \ge 1 $ and $ \f \in \mathrm{L}^2(0,T;\V') $, we establish the \emph{existence of a global weak solution}
 (Definition \ref{weakd}) $$
\bv \in \mathrm{C}_w([0,T];\H) \cap \mathrm{L}^2(0,T;\V) \cap \mathrm{L}^{r+1}(0,T;\widetilde{\L}^{r+1}),
$$
with
$$
\partial_t \bv \in \mathrm{L}^{p}(0,T;\V') + \mathrm{L}^{\frac{r+1}{r}}(0,T;\widetilde{\L}^{\frac{r+1}{r}}),
\quad
p =
\begin{cases}
	\frac{d}{4}, & r \in [1,3),\\
	2, & r \in [3,\infty),
\end{cases}
$$
satisfying the \emph{energy inequality} given in \eqref{ener-inequality}. The proof is based on the \emph{Faedo-Galerkin approximation}, the \emph{Banach-Alaoglu theorem}, the \emph{Aubin-Lions-Simon compactness lemma}, and the \emph{Lions-Magenes lemma} (see Theorem~\ref{thm2.11}). The \emph{existence of periodic solutions} for the Faedo-Galerkin approximated problem \eqref{a6} is established using \emph{Brouwer’s fixed point theorem}.

For $ r \in [1,\infty) $ when $d = 2 $ and for $ r \in [3,\infty) $ when $d = 3 $, we prove that $\bv \in \mathrm{C}([0,T];\H) $ and that the \emph{energy equality} given in \eqref{energy-equality} holds. Moreover, for $ r \in (3,\infty) $, the \emph{uniqueness of weak solutions} is obtained under either of the conditions
$$
\mu\lambda_1 + 2\alpha > \zeta, \quad \text{or} \quad
\left(\mu - \frac{1}{\beta}\right)\lambda_1 + \alpha > \eta, \quad \beta\mu > 1,
$$
{where $\lambda_1$ is the first eigenvalue of the Stokes operator (se Subsection \ref{sub2.2})}, and $ \zeta $ and $ \eta $ are defined in \eqref{eqn-zeta} and \eqref{eqn-eta}, respectively. For the critical case $ r = 3 $, uniqueness holds if $ \beta\mu > 1 $ and
$$
\alpha + \left(\mu - \frac{1}{\beta}\right)\lambda_1 > \kappa,
\quad
\kappa = \left(|\gamma| q 2^{q-2}\right)^{\frac{2}{3-q}}
\left(\frac{3-q}{2}\right)
\left(\frac{q-1}{\beta}\right)^{\frac{q-1}{2}}
\left[1 + 2^{\frac{q-1}{2}}\right].
$$
Importantly, \emph{no smallness assumption on the forcing term} is required to establish the uniqueness of weak solutions, marking a \emph{significant improvement} over the classical results for the NSE, where uniqueness typically relies on smallness assumptions on the external force (cf. \cite[Theorem 5.1]{HMo}, \cite[Theorem 1.2]{HK-97}).

{The existence of global weak solutions to the 2D and 3D CBF equations was established in \cite[Theorem 4.1]{SNA}. For the super-critical case, the existence and  uniqueness of weak solutions and the validity of the energy equality were proved in \cite[Theorem 3.3]{KT2} and \cite[Theorem 3.5]{MTM7} (see also \cite[Theorem 5.4]{CLF} for the energy equality in the critical 3D CBF equations in bounded domains and \cite[Theorem 1.4]{KWH} for the space-periodic setting). Furthermore, the existence of weak solutions in both super- and sub-critical cases, as well as the uniqueness of weak solutions and the existence of strong solutions in the super-critical case for the 3D CBFeD system with space-periodic boundary conditions, were addressed in \cite[Theorems 1.1, 1.2]{PAM}. It is important to note that all the aforementioned works address the standard initial value problem and focus on regular (non-time-periodic) solutions. In contrast to \cite{PAM}, the present paper is devoted to the study of time-periodic solutions to the 2D and 3D CBFeD equations \eqref{eqn-model} posed on bounded domains. Our main objective is to establish the existence of time-periodic weak solutions and, in the super-critical case, to prove the uniqueness of such weak solutions.}

	\subsection{Organization of the paper}

	The remainder of the paper is organized as follows. In the next section, we introduce the functional framework required to establish the existence of a  weak solution for system \eqref{eqn-model}. In Section~\ref{sect3}, we discuss the global existence ($d=2,3$ and $r\in[1,\infty)$) and uniqueness ($d=2,3$ and $r\in(3,\infty)$) of a weak solution to the periodic problem \eqref{eqn-model} (Theorem~\ref{thm2.11}). For $d=2,3$ and $r\in[1,3]$, under a suitable smallness assumption on the data and applying the Banach fixed point theorem, the uniqueness of strong solutions is obtained in Appendix \ref{App}.

	\section{Mathematical Formulation}\label{sec2}\setcounter{equation}{0}

This section introduces the essential functional spaces and summarizes the key properties of the associated linear and nonlinear operators needed to establish the global solvability of system \eqref{eqn-model}. The functional framework used here largely follows the approach in \cite{MTM7,MTM8}.

\subsection{Function spaces}

Let $\C_0^{\infty}(\Omega;\R^d)$ denote the space of all infinitely differentiable, $\R^d$-valued functions with compact support in a domain $\Omega \subset \R^d$.  Let us define 
\begin{align*} 
	\mathcal{V}&:=\{\u\in\C_0^{\infty}(\Omega,\R^d):\nabla\cdot\u=0\},\\
	\mathbb{H}&:=\text{the closure of }\ \mathcal{V} \ \text{ in the Lebesgue space } \L^2(\Omega)=\mathrm{L}^2(\Omega;\R^d),\\
	\mathbb{V}&:=\text{the closure of }\ \mathcal{V} \ \text{ in the Sobolev space } \H_0^1(\Omega)=\mathrm{H}_0^1(\Omega;\R^d),\\
	\widetilde{\L}^{p}&:=\text{the closure of }\ \mathcal{V} \ \text{ in the Lebesgue space } \L^p(\Omega)=\mathrm{L}^p(\Omega;\R^d),
\end{align*}
for $p\in(2,\infty)$. Then under some smoothness assumptions on the boundary, we characterize the spaces $\H$, $\V$ and $\widetilde{\L}^p$ as 
$$
\H=\{\u\in\L^2(\Omega):\nabla\cdot\u=0,\u\cdot\boldsymbol{n}\big|_{\partial\Omega}=0\},$$ with norm  $\|\u\|_{\H}^2:=\int_{\Omega}|\u(x)|^2\d x,
$
where $\boldsymbol{n}$ is the outward normal to $\partial\Omega$ and $\u\cdot\boldsymbol{n}$ is understood in the sense of trace (\cite[Section 1.3, Chapter 1]{Te}),
$$
\V=\{\u\in\H_0^1(\Omega):\nabla\cdot\u=0\},$$  with norm $ \|\u\|_{\V}^2:=\int_{\Omega}|\nabla\u(x)|^2\d x,
$ ({due to the Poincar\'e inequality given in \eqref{poin}}) and $$\widetilde{\L}^p=\{\u\in\L^p(\Omega):\nabla\cdot\u=0, \u\cdot\boldsymbol{n}\big|_{\partial\Omega}=0\},$$ with norm $\|\u\|_{\widetilde{\L}^p}^p=\int_{\Omega}|\u(x)|^p\d x$, respectively.
Let $(\cdot,\cdot)$ denote the inner product in the Hilbert space $\H$ and $\langle \cdot,\cdot\rangle $ represent the induced duality between the spaces $\V$  and its dual $\V'$ as well as $\widetilde{\L}^p$ and its dual $\widetilde{\L}^{p'}$, where $\frac{1}{p}+\frac{1}{p'}=1$. Note that $\H$ can be identified with its dual $\H'$. From \cite[Subsection 2.1]{FKS}, we have that the sum space $\V'+\widetilde{\L}^{p'}$ is well defined and  is a Banach space with respect to the norm 
\begin{align*}
	\|\u\|_{\V'+\widetilde{\L}^{p'}}&:=\inf\{\|\u_1\|_{\V'}+\|\u_2\|_{\wi\L^{p'}}:\u=\u_1+\u_2, \y_1\in\V' \ \text{and} \ \y_2\in\wi\L^{p'}\}\nonumber\\&=
	\sup\left\{\frac{|\langle\u_1+\u_2,\f\rangle|}{\|\f\|_{\V\cap\widetilde{\L}^p}}:\boldsymbol{0}\neq\f\in\V\cap\widetilde{\L}^p\right\},
\end{align*}
where $\|\cdot\|_{\V\cap\widetilde{\L}^p}:=\max\{\|\cdot\|_{\V}, \|\cdot\|_{\wi\L^p}\}$ is a norm on the Banach space $\V\cap\widetilde{\L}^p$. Also the norm $\max\{\|\u\|_{\V}, \|\u\|_{\wi\L^p}\}$ is equivalent to the norms  $\|\u\|_{\V}+\|\u\|_{\widetilde{\L}^{p}}$ and $\sqrt{\|\u\|_{\V}^2+\|\u\|_{\widetilde{\L}^{p}}^2}$ on the space $\V\cap\widetilde{\L}^p$. Moreover, we have the continuous embeddings $$\V\cap\widetilde{\L}^p\hookrightarrow\V\hookrightarrow\H\cong\H'\hookrightarrow\V'\hookrightarrow\V'+\widetilde{\L}^{p'},$$ where the embedding $\V\hookrightarrow\H$ is compact. 
	\subsection{Linear operator}\label{sub2.2}
	It is well known (see, for instance, \cite{DFHM,MHe,HKTY}) that every vector field $\u \in \L^p(\Omega)$, with $1 < p < \infty$, admits a unique {\emph{Helmholtz}}  decomposition of the form $	\u = \v + \nabla q,$
	where $\v \in \L^p(\Omega)$ satisfies $\mathrm{div\ }\v = 0$ in the sense of distributions in $\Omega$ and $\v \cdot \n = 0$ on $\partial \Omega$ (in the sense of traces), while $q \in \mathrm{W}^{1,p}(\Omega)$.
	For smooth vector fields, this decomposition is orthogonal in $\L^2(\Omega)$. Since the decomposition $\u = \bv + \nabla q$ holds for all $\u \in \L^p(\Omega)$, we can define the \emph{Helmholtz projection operator}
$	\mathcal{P}_p : \L^p(\Omega) \to \widetilde{\L}^p(\Omega),$
$	\mathcal{P}_p\u := \bv,$
	which projects a vector field onto its divergence-free component.
Let us also introduce the space
$	\mathcal{G}_p(\Omega) := \{\nabla q : q \in \mathrm{W}^{1,p}(\Omega)\},$
	endowed with the norm $\|\nabla q\|_{\L^p}$.
	Then, by the above decomposition, we obtain the direct sum
	$\L^p(\Omega) = \widetilde{\L}^p(\Omega) \oplus \mathcal{G}_p(\Omega),$
	where $\widetilde{\L}^p(\Omega)$ denotes the subspace of divergence-free vector fields in $\L^p(\Omega)$. 
	From  \cite[Theorem 1.4]{CSHS}, we infer 
	\begin{align*}
		\|\nabla q\|_{\L^p}\leq C\|\u\|_{\L^p}, \ \|\v\|_{\L^q}\leq (C+1)\|\u\|_{\L^p}\ \text{ and }\ \|\nabla q\|_{\L^p}+\|\v\|_{\L^q}\leq (2C+1)\|\u\|_{\L^p},
	\end{align*}
	where $C=C(\Omega,p)>0$ is a constant such that \begin{align*}
		\|\nabla q\|_{\L^p}\leq C\sup_{0\neq\nabla\varphi\in \mathcal{G}_{p'}(\Omega)}\frac{|\langle\nabla q,\nabla\varphi\rangle|}{\|\nabla\varphi\|_{\L^{p'}}}, \ \text{ for all }\ \nabla q\in \mathcal{G}_{p}(\Omega),
	\end{align*} 
	with $\frac{1}{p}+\frac{1}{p'}=1$. Setting $\mathcal{P}_p\u:=\v$, we obtain a bounded linear operator $\mathcal{P}_p:\L^p(\Omega)\to\wi\L^{p}(\Omega)$ such that $\mathcal{P}_p^2=\mathcal{P}_p$ (projection). For $p=2$, $\mathcal{P}:=\mathcal{P}_2:\L^2(\Omega)\to\H$ is an orthogonal projection.   Since $\Omega$ is of class $\C^2$, from \cite[Remark 1.6, Chapter 1, pp. 18]{Te}, we also infer that $\mathcal{P}$ maps $\H^1(\Omega)$ into itself and is continuous for the norm of $\H^1(\Omega)$.	

	We define the Stokes operator $\A$ by
	\begin{align*}
	\A\u := -\mathcal{P}\Delta\u, \  \u \in \D(\A) = \V \cap \H^2(\Omega),
	\end{align*}
	where $\mathcal{P}$ denotes the Helmholtz-Leray projection. It is well known that $\A$ is a \emph{non-negative self-adjoint operator} on $\H$, satisfying
	$\D(\A^{1/2}) = \V,$
	and
	\begin{align*}
		\langle \A\u, \u \rangle = \|\u\|_{\V}^2,
		\  \text{for all } \u \in \V,
	\end{align*}
	which in particular implies $\|\A\u\|_{\V'} \le \|\u\|_{\V}$. 
	When $\Omega$ is bounded, the operator $\A$ is \emph{invertible}, and its inverse $\A^{-1}$ is \emph{bounded, self-adjoint, and compact} in $\H$. Hence, by the \emph{spectral theorem}, the spectrum of $\A$ consists of an infinite sequence of positive eigenvalues
	$0 < \lambda_1 \le \lambda_2 \le \cdots \le \lambda_k \le \cdots, \  \lambda_k \to \infty \text{ as } k \to \infty,$
	with corresponding eigenfunctions $\{\boldsymbol{w}_k\}_{k=1}^\infty$ forming a complete orthonormal basis of $\H$, such that
$	\A \boldsymbol{w}_k = \lambda_k \boldsymbol{w}_k, \  \text{ for all }\  k \in \mathbb{N}.$
	Every $\u \in \H$ can thus be expanded as
$	\u = \sum_{k=1}^\infty (\u, \boldsymbol{w}_k )\boldsymbol{w}_k,$ so that 
$	\A\u = \sum_{k=1}^\infty \lambda_k (\u, \boldsymbol{w}_k )\boldsymbol{w}_k.$
	Consequently, we obtain the \emph{Poincar\'e inequality}:
	\begin{align}\label{poin}
		\|\nabla \u\|_{\H}^2
		= \langle \A\u, \u \rangle
		= \sum_{k=1}^\infty \lambda_k |(\u, \boldsymbol{w}_k )|^2
		\ge \lambda_1 \sum_{k=1}^\infty |( \u, \boldsymbol{w}_k )|^2
		= \lambda_1 \|\u\|_{\H}^2.
	\end{align}

	\subsection{Bilinear operator}
	
	Let us define the trilinear form $b(\cdot,\cdot,\cdot):\V\times\V\times\V\to\R$ by $$b(\u,\v,\w)=\int_{\Omega}(\u(x)\cdot\nabla)\v(x)\cdot\w(x)\d x=\sum\limits_{i,j=1}^2\int_{\Omega}u_i(x)\frac{\partial v_j(x)}{\partial x_i}w_j(x)\d x.$$ If $\u, \v$ are such that the linear map $b(\u, \v, \cdot) $ is continuous on $\V$, the corresponding element of $\V'$ is denoted by $\B(\u, \v)$. We also denote  $\B(\u) = \B(\u, \u)=\mathcal{P}[(\u\cdot\nabla)\u]$.
	An integration by parts yields 
	\begin{equation*}
		\left\{
		\begin{aligned}
			b(\u,\v,\v) &= 0,\ \text{ for all }\ \u,\v \in\V,\\
			b(\u,\v,\w) &=  -b(\u,\w,\v),\ \text{ for all }\ \u,\v,\w\in \V.
		\end{aligned}
		\right.\end{equation*}
	Using H\"older's inequality, we deduce the following inequality:
	\begin{align*}
		|b(\u,\v,\w)|=|b(\u,\w,\v)|\leq \|\u\|_{\wi\L^4}\|\nabla\w\|_{\H}\|\v\|_{\wi\L^4},\ \text{ for all }\ \u,\v,\w\in\V,
	\end{align*}
	and hence by using the Ladyzhenskaya inequality,  we get 
	\begin{align*}
		\|\mathrm{B}(\u,\v)\|_{\V'}\leq \|\u\|_{\wi\L^4}\|\v\|_{\wi\L^4}\leq \sqrt{2}\|\u\|_{\H}^{1/2}\|\u\|_{\V}^{1/2}\|\v\|_{\H}^{1/2}\|\v\|_{\V}^{1/2}, \ \text{ for all} \ \u,\v\in\V. 
	\end{align*}
	Furthermore, for $\u,\wi\u,\v\in\V$, an application of the Poincar\'e inequality (see \eqref{poin}) yields 
	\begin{align}\label{2b7}
		|\langle\B(\u)-\B(\widetilde{\u}),\v\rangle|&
		\leq\sqrt{\frac{2}{\lambda_1}}\left(\|\u\|_{\V}+\|\wi\u\|_{\V}\right)\|\u-\wi\u\|_{\V}\|\v\|_{\V}. 
	\end{align} Therefore, the map $\B : \V \to \V'$ is locally Lipschitz continuous. By using interpolation inequality, it can be shown  that $\B$ maps $ \V\cap\widetilde{\L}^{r+1}$  into $\V'+\widetilde{\L}^{\frac{r+1}{r}}$ since 
	\begin{align*}
	\left|\langle \B(\u,\u),\v\rangle \right|=\left|b(\u,\v,\u)\right|\leq \|\u\|_{\widetilde{\L}^{r+1}}\|\u\|_{\widetilde{\L}^{\frac{2(r+1)}{r-1}}}\|\v\|_{\V}\leq\|\u\|_{\widetilde{\L}^{r+1}}^{\frac{r+1}{r-1}}\|\u\|_{\H}^{\frac{r-3}{r-1}}\|\v\|_{\V},
	\end{align*}
	for all $\v\in\V\cap\widetilde{\L}^{r+1}$. Therefore, we deduce 
	\begin{align}\label{be-est}
	\|\B(\u)\|_{\V'+\widetilde{\L}^{\frac{r+1}{r}}}\leq\|\u\|_{\widetilde{\L}^{r+1}}^{\frac{r+1}{r-1}}\|\u\|_{\H}^{\frac{r-3}{r-1}}.
	\end{align}
	\subsection{Nonlinear operator}\label{sub2.4}
	We now consider the operator $\mathcal{C}(\u):=\mathcal{P}(|\u|^{r-1}\u)$ for $\u\in\V\cap\wi\L^{r+1}$. It is immediate that $\langle\mathcal{C}(\u),\u\rangle =\|\u\|_{\widetilde{\L}^{r+1}}^{r+1}$.
	Using Taylor's formula, we have (cf. \cite{MTM7})
	\begin{align}\label{213}
		|\langle \mathcal{C}(\u)-\mathcal{C}(\v),\w\rangle|&\leq r\left(\|\u\|_{\widetilde{\L}^{r+1}}+\|\v\|_{\widetilde{\L}^{r+1}}\right)^{r-1}\|\u-\v\|_{\widetilde{\L}^{r+1}}\|\w\|_{\widetilde{\L}^{r+1}},
	\end{align}
	for all $\u,\v,\w\in\V\cap\widetilde{\L}^{r+1}$. 
	Thus the operator $\mathcal{C}(\cdot):\V\cap\widetilde{\L}^{r+1}\to\V'+\widetilde{\L}^{\frac{r+1}{r}}$ is locally Lipschitz.
	Moreover, 	for any $r\in[1,\infty)$, one can establish that  (cf. \cite{MTM7})
	\begin{align}\label{2.23}
		\langle\mathcal{C}(\u)-\mathcal{C}(\v),\u-\v\rangle&\geq \frac{1}{2}\||\u|^{\frac{r-1}{2}}(\u-\v)\|_{\H}^2+\frac{1}{2}\||\v|^{\frac{r-1}{2}}(\u-\v)\|_{\H}^2\nonumber\\&\geq \frac{1}{2^{r-1}}\|\u-\v\|_{\wi\L^{r+1}}^{r+1} \geq 0,
	\end{align}
	for $r\geq 1$ (replace $2^{r-2}$ with $1,$ for $1\leq r\leq 2$). Similar properties hold true for $\widetilde{\mathcal{C}}(\u):=\mathcal{P}(|\u|^{q-1}\u)$ for $\u\in\V\cap\wi\L^{q+1}$.

The following result is a generalization of the well-known Lions-Magenes Lemma \cite{JLL+EM-I-72}, which will be used later to establish the energy equality. We denote the the space of distributions on $(0,T)$ with values in a Banach space $\mathbb{X}$ by $\mathcal{D}^\prime(0,T; \mathbb{X})$. 

\begin{theorem}[{\cite[Theorem 1.8]{VVC+MIV-02}}]\label{Thm-Abs-cont}
	Let $\H$ be a Hilbert space, and let $\V, \E, \mathbb{X}$ be Banach spaces, satisfying the inclusions
	\[\V \hookrightarrow \H \hookrightarrow \V^\prime \hookrightarrow \mathbb{X}\  \mbox{ and } \ \E \hookrightarrow \H \hookrightarrow \E^\prime \hookrightarrow \mathbb{X},\]
	where the spaces $\V^\prime$ and $\E^\prime$ are the duals of $\V$ and $\E$, respectively. Here the space $\H^\prime$ is identified with $\H$. Assume that $p>1$ and $\bv\in L^{2}(0,T;\V)\cap L^p (0,T; \E)$,  $\bv^{\prime}\in \mathcal{D}^\prime(0,T; \mathbb{X})$ and $\bv^\prime = \bv_1 + \bv_2$, where $\bv_1 \in L^{2}(0,T;\V^\prime)$ and $\bv_2 \in L^{p^\prime}(0,T;\E^{\prime})$. Then,
	\begin{itemize}
		\item[(i)] $\bv \in \mathrm{C}([0,T]; \H),$
		\item[(ii)] the function $[0,T] \ni t \mapsto\|\bv(t)\|_{\H}^2 \in \mathbb{R}$ is absolutely continuous on $[0,T]$, and
		\begin{align*}
			\|\bv(t)\|_{\H}^2 = \left\langle \bv(t),\bv^{\prime}(t)\right\rangle = 2\left\langle \bv(t),\bv_1(t)\right\rangle + 2\left\langle \bv(t),\bv_2(t)\right\rangle,
		\end{align*}
	\end{itemize}
	for a.e.  $t\in [0,T]$, that is, 
	\begin{align*}
		\|\bv(t)\|_{\H}^2 = \|\bv(t)\|_{\H}^2 + 2\int_0^t \big[ \left\langle \bv(s),\bv_1(s)\right\rangle + \left\langle \bv(s),\bv_2(s)\right\rangle\big] ds,
	\end{align*}
	for all $t\in[0,T]$. 
\end{theorem}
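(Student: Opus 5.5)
The plan is to reduce this generalized Lions--Magenes statement to the smooth case by mollification in time, and then pass to the limit using the duality pairings $\V\times\V'$ and $\E\times\E'$ separately. First I extend $\bv$ to a slightly larger interval, say $(-\delta,T+\delta)$, by reflection: $\bv(-t):=\bv(t)$ and $\bv(T+t):=\bv(T-t)$ for small $t$. Reflection preserves membership in $L^2(\cdot;\V)\cap L^p(\cdot;\E)$. The distributional derivative of the extension is obtained from $\bv_1,\bv_2$ by odd reflection, so it still splits as a sum in $L^2(\V')+L^{p'}(\E')$; no boundary terms appear because the even extension of a function with $\mathbb{X}$-valued distributional derivative is continuous across $0$ and $T$ as an $\mathbb{X}$-valued function. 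Next I take $\rho_\varepsilon$, a standard even mollifier in time, and set $\bv^\varepsilon=\rho_\varepsilon*\bv$. Then $\bv^\varepsilon\in \mathrm{C}^1([0,T];\V\cap\E)$ and $(\bv^\varepsilon)'=\rho_\varepsilon*\bv_1+\rho_\varepsilon*\bv_2$. Moreover, $\bv^\varepsilon\to\bv$ in $L^2(\V)\cap L^p(\E)$, $(\bv^\varepsilon)'\to\bv'$ in $L^2(\V')+L^{p'}(\E')$, and $\bv^\varepsilon$ is smooth as an $\H$-valued map.

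For smooth functions the classical identity holds:
\begin{align*}
\|\bv^\varepsilon(t)\|_{\H}^2-\|\bv^\varepsilon(s)\|_{\H}^2=2\int_s^t\big[\langle\bv^\varepsilon,\rho_\varepsilon*\bv_1\rangle+\langle\bv^\varepsilon,\rho_\varepsilon*\bv_2\rangle\big]\d\tau .
\end{align*}
I apply this to the difference $\bv^\varepsilon-\bv^{\varepsilon'}$, integrate in $s$ over $(0,T)$, and bound the right-hand side by Hölder in time. This gives
\begin{align*}
\sup_t\|\bv^\varepsilon(t)-\bv^{\varepsilon'}(t)\|_{\H}^2\le C\Big(\|\bv^\varepsilon-\bv^{\varepsilon'}\|_{L^2(\H)}^2+\|\bv^\varepsilon-\bv^{\varepsilon'}\|_{L^2(\V)}\|(\bv^\varepsilon-\bv^{\varepsilon'})'_1\|_{L^2(\V')}+\|\bv^\varepsilon-\bv^{\varepsilon'}\|_{L^p(\E)}\|(\bv^\varepsilon-\bv^{\varepsilon'})'_2\|_{L^{p'}(\E')}\Big),
\end{align*}
and the right-hand side tends to $0$ as $\varepsilon,\varepsilon'\to0$. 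Hence $\{\bv^\varepsilon\}$ is Cauchy in $\mathrm{C}([0,T];\H)$, and its limit agrees a.e.\ with $\bv$. This proves (i) after modifying $\bv$ on a null set.

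To obtain (ii), I pass to the limit in the identity for $\bv^\varepsilon$ itself. The left-hand side converges pointwise by the uniform convergence just established. On the right, the pairings converge in $L^1(0,T)$ because each factor converges in its dual Lebesgue--Bochner space, and this yields the integral identity for all $s,t$. Absolute continuity then follows, since $\langle\bv,\bv_1\rangle+\langle\bv,\bv_2\rangle\in L^1(0,T)$. The a.e.\ derivative formula $\frac{\d}{\d t}\|\bv\|_{\H}^2=2\langle\bv_1,\bv\rangle+2\langle\bv_2,\bv\rangle$ follows from the Lebesgue differentiation theorem.

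The main obstacle is the compatibility of the pairings. The quantities $\langle\bv,\bv_1\rangle$ and $\langle\bv,\bv_2\rangle$ must be meaningful and consistent with the $\H$-inner product when both arguments lie in $\H$. This requires that $\V\cap\E$ is dense, or at least that the $\H$-pairing restricted to $\V\times\V'$ and to $\E\times\E'$ comes from the same embedding into $\mathbb{X}$. I would check this first, using the chain $\V\hookrightarrow\H\hookrightarrow\V'\hookrightarrow\mathbb{X}$ and its counterpart for $\E$, so that all derivatives are taken in the common space $\mathbb{X}$. The reflection step near the endpoints needs the same care.
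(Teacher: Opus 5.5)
The paper gives no proof of this statement; it quotes it from Chepyzhov--Vishik. Your route is the standard regularization proof of this generalized Lions--Magenes lemma (cf.\ Temam, Ch.~III, Lemma~1.2): even reflection across $0$ and $T$, mollification in time, the smooth energy identity averaged in $s$ to get a Cauchy estimate in $\mathrm{C}([0,T];\H)$, and passage to the limit in the integrated identity. Its analytic steps are sound. The reflection needs no special care, since it only uses $\bv\in\mathrm{W}^{1,1}(0,T;\mathbb{X})\subset\mathrm{C}([0,T];\mathbb{X})$. You also prove the intended identities, $\frac{\d}{\d t}\|\bv\|_{\H}^2=2\langle\bv_1,\bv\rangle+2\langle\bv_2,\bv\rangle$ and $\|\bv(t)\|_{\H}^2=\|\bv(s)\|_{\H}^2+2\int_s^t[\cdots]$, which the statement misprints.

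The step that cannot work as planned is the ``compatibility check.'' Consistency with the $\H$-inner product for arguments in $\H$ is automatic. What you actually use, each time you write $((\bv^\varepsilon)',\bv^\varepsilon)_{\H}=\langle\rho_\varepsilon*\bv_1,\bv^\varepsilon\rangle+\langle\rho_\varepsilon*\bv_2,\bv^\varepsilon\rangle$ (for $\bv^\varepsilon$ and for $\bv^\varepsilon-\bv^{\varepsilon'}$), is the following:
\begin{align*}
\boldsymbol{a}\in\V',\ \boldsymbol{b}\in\E',\ \boldsymbol{a}=\boldsymbol{b}\ \text{in}\ \mathbb{X}\ \Longrightarrow\ \langle\boldsymbol{a},\boldsymbol{w}\rangle_{\V',\V}=\langle\boldsymbol{b},\boldsymbol{w}\rangle_{\E',\E}\ \text{ for all }\ \boldsymbol{w}\in\V\cap\E .
\end{align*}
You apply it with $\boldsymbol{a}=(\bv^\varepsilon)'-\rho_\varepsilon*\bv_1$ and $\boldsymbol{b}=\rho_\varepsilon*\bv_2$.

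This does not follow from the two embedding chains. Density of $\V\cap\E$ in $\V$ and $\E$ is not enough either for an arbitrary $\mathbb{X}$. The condition is in fact necessary for the statement to hold, because the splitting $\bv'=\bv_1+\bv_2$ is not unique. Take $\bv\equiv\boldsymbol{w}$ constant, with $\bv_1=\boldsymbol{a}$ and $\bv_2=-\boldsymbol{b}$ for such a pair; then (ii) reads $0=2\langle\boldsymbol{a},\boldsymbol{w}\rangle-2\langle\boldsymbol{b},\boldsymbol{w}\rangle$.

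This can fail. Take $d=3$, $\V=\mathrm{H}^1_0$, $\E=\mathrm{L}^s$ with $s>6$, and let $\mathbb{X}$ be a quotient of $\V'\times\E'$ that glues an element of $\mathrm{H}^{-1}\setminus\mathrm{L}^{s'}$ to an element of $\mathrm{L}^{s'}\setminus\mathrm{H}^{-1}$. Both embeddings stay injective and agree on $\H$, yet the compatibility condition is violated. So the statement as literally written is false.

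The repair is to add compatibility as a hypothesis rather than try to verify it. For example, take $\mathbb{X}=(\V\cap\E)'$ with $\V\cap\E$ dense in $\V$ and in $\E$, and all pairings defined by restriction. This is exactly the paper's situation: $\mathbb{X}=\V'+\wi\L^{\frac{r+1}{r}}=(\V\cap\wi\L^{r+1})'$, with $\mathcal{V}$ dense. With that hypothesis your proof is complete.
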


	\section{CBFeD Equations with Periodic Inputs} \label{sect3}\setcounter{equation}{0} In this section, we study the CBFeD equations with periodic inputs and establish the existence and uniqueness of global weak solutions. By applying the Helmholtz orthogonal projection $\mathcal{P}$ to system \eqref{eqn-model}, we obtain the following projected form of the CBFeD equations with periodic inputs:
	\begin{equation}\label{222}
		\left\{
		\begin{aligned}
			\partial_t\bv(t)+\mu\A\bv(t)+\B(\bv(t))+\alpha\bv(t)+\beta\mathcal{C}(\bv(t))+\gamma\widetilde{\mathcal{C}}(\bv(t))&=\f(t),\ \text{ in }\ \V^{\prime}+\widetilde{\mathbb{L}}^{\frac{r+1}{r}}, \\
			\bv(0)&=\bv(T),
		\end{aligned}
		\right.
	\end{equation}
	for a.e. $t\in[0,T]$, where $\f=\mathcal{P}\boldsymbol{g}$. The following definition of weak solutions is motivated from \cite[Defintion 2.1]{galdi-1}  and \cite[Definition 1.1.1, Chapter V]{HSo}.

	\begin{definition}\label{weakd}
		For $r\geq 1$,	a function  $$\bv\in\mathrm{L}^{\infty}(0,T;\H)\cap\mathrm{L}^2(0,T;\V)\cap\mathrm{L}^{r+1}(0,T;\widetilde\L^{r+1})),$$  with $\partial_t\bv\in\mathrm{L}^{p}(0,T;\mathbb{V}')+\mathrm{L}^{\frac{r+1}{r}}(0,T;\widetilde\L^{\frac{r+1}{r}}),$ where
		\begin{align}\label{value-p}
			p=\left\{\begin{array}{cl}\frac{d}{4}&\text{ for } \ r\in[1,3),\\
			2&\text{ for }\ 3\in[3,\infty),\end{array}\right.
		\end{align}
		 is called a \emph{weak solution} to the system (\ref{222}), if for $\f\in\mathrm{L}^2(0,T;\V')$, $\bv(\cdot)$ satisfies: $\bv(0)=\bv(T)\in\H$ and for all $t\in(0,T]$
	{	\begin{align}\label{3.13}
	-&	\int_0^t\int_{\Omega}	\bv(x,s)\partial_t\boldsymbol{\phi}(x,s)\d x\d s+\mu\int_0^t\int_{\Omega}\nabla\bv(x,s):\nabla\boldsymbol{\phi}(x,s)\d x\d s\nonumber\\&\quad+\int_0^t\int_{\Omega}((\bv(x,s)\cdot\nabla)\bv(x,s))\cdot\boldsymbol{\phi}(x,s)\d x\d s\nonumber\\&\quad+\int_0^t\int_{\Omega}\left[\alpha\bv(x,s)+\beta|\bv(x,s)|^{r-1}\bv(x,s)+\gamma|\bv(x,s)|^{q-1}\bv(x,s)\right]\cdot\boldsymbol{\phi}(x,s)\d x\d s\nonumber\\&=\int_{\Omega}\boldsymbol{v}(x,0)\boldsymbol{\phi}(x,0)\d x- \int_{\Omega}\boldsymbol{v}(x,t)\boldsymbol{\phi}(x,t)\d x+ \int_0^t\langle\f(s),\boldsymbol{\phi}(s)\rangle\d s,
		\end{align}
		for all time-periodic test functions $\boldsymbol{\phi}\in C_0^{\infty}([0,T];\mathcal{V})$.}
	\end{definition}
	\begin{definition}
		A \emph{Leray-Hopf weak solution} of problem \eqref{222} with the periodic condition $\bv(0)=\bv(T)\in\H$ is a weak solution satisfying the following \emph{strong energy inequality}:
		\begin{align}\label{ener-inequality}
		&2\mu\int_0^T\|\bv(t)\|_{\V}^2\d t+2\alpha\int_0^T\|\bv(t)\|_{\H}^2\d t+2\beta\int_0^T\|\bv(t)\|_{\wi\L^{r+1}}^{r+1}\d t+2\gamma\int_0^T\|\bv(t)\|_{\wi\L^{q+1}}^{q+1}\d t\nonumber\\&\leq 2\int_0^T\langle\f(t),\bv(t)\rangle\d t.
		\end{align}
	\end{definition}
	
	\begin{remark}
		1. The regularity  $\bv\in\mathrm{L}^{\infty}(0,T;\H)\cap\mathrm{L}^2(0,T;\V)\cap\mathrm{L}^{r+1}(0,T;\widetilde\L^{r+1}))$ and  $\partial_t\bv\in\mathrm{L}^{p}(0,T;\mathbb{V}')+\mathrm{L}^{\frac{r+1}{r}}(0,T;\widetilde\L^{\frac{r+1}{r}})\hookrightarrow \mathrm{L}^{\min\left\{p,\frac{r+1}{r}\right\}}(0,T;\mathbb{V}'+\widetilde\L^{\frac{r+1}{r}}) $ imply $\bv\in\W^{1,\min\left\{p,\frac{r+1}{r}\right\}}(0,T;\mathbb{V}'+\widetilde\L^{\frac{r+1}{r}})\hookrightarrow\C([0,T];\mathbb{V}'+\widetilde\L^{\frac{r+1}{r}}) $ (\cite[Theorem 2, pp. 302]{LCE}). Since $\H$ is reflexive and the embedding $\H\hookrightarrow\mathbb{V}'+\widetilde\L^{\frac{r+1}{r}}$ is continuous,   therefore, by an application of \cite[Lemma 8.1, Chapter 3, pp. 275]{JLL+EM-I-72} (also see \cite[Proposition 1.7.1, Chapter 1, pp. 61]{PCAM}) yields $\bv\in\C_w([0,T];\H)$, where $\bv\in\C_w([0,T];\H)$ denotes the space of functions $\bv:[0,T]\to \H$ which are weakly continuous. That is, for all $\boldsymbol{\zeta}\in\H$, the scalar function $[0,T]\ni t\mapsto (\bv(t),\boldsymbol{\zeta})\in\R$ is continuous on $[0,T]$. Therefore, the first two terms on the right-hand side of \eqref{3.13} are well defined.

	2. 	By using the density of $\mathcal{V}\subset \mathbb{V}\cap\wi\L^{r+1}$, we infer from \cite[Lemmas 2.1 and 2.2]{galdi-1} that \eqref{3.13} is equivalent to the following formation: for all $t\in(0,T]$
		\begin{align*}
			&\mu\int_0^t(\nabla\bv(s),\nabla\boldsymbol{\phi})\d s+\int_0^t\langle (\bv(s)\cdot\nabla)\bv(s),\boldsymbol{\phi}\rangle\d s\nonumber\\&\quad+\int_0^t\left\langle\left[\alpha\bv(s)+\beta|\bv(s)|^{r-1}\bv(s)+\gamma|\bv(s)|^{q-1}\bv(s)\right],\boldsymbol{\phi}\right\rangle\d s\nonumber\\&=(\bv(0),\boldsymbol{\phi})-(\bv(t),\boldsymbol{\phi})+\int_0^t\langle\f(s),\boldsymbol{\phi}\rangle\d s,
		\end{align*}
		for all $\boldsymbol{\phi}\in\V\cap\wi\L^{r+1}$. 
	\end{remark}
	
	We now state and prove the following result concerning the existence and uniqueness of weak solutions to system \eqref{222}. The existence of a weak solution is established by following the approach presented in \cite{HMo}. The uniqueness of weak solutions is proven without imposing any smallness assumptions on the forcing term; instead, the conditions are imposed on the parameters appearing in \eqref{222}.

	\begin{theorem}\label{thm2.11}
		Let  $\f\in\mathrm{L}^2(0,T;\V^{\prime})$  be given.  Then  there exists a weak solution to problem (\ref{222}) satisfying $\bv\in\mathrm{C}_w([0,T];\H)\cap\mathrm{L}^2(0,T;\V)\cap\mathrm{L}^{r+1}(0,T;\widetilde{\L}^{r+1}),  $ with $\partial_t\bv\in \mathrm{L}^{p}(0,T;\V^{\prime})+\mathrm{L}^{\frac{r+1}{r}}(0,T;\widetilde{\L}^{\frac{r+1}{r}})$, where $p$ is given in \eqref{value-p},  the energy estimate 
		\begin{align}\label{p36}
			&	\|\bv(t)\|_{\H}^2+\mu\int_0^t\|\bv(s)\|_{\V}^2\d s+2\alpha\int_0^t\|\bv(s)\|_{\H}^2\d s+2\beta\int_0^t\|\bv(s)\|_{\wi\L^{r+1}}^{r+1}\d s\nonumber\\&\leq \left(\frac{1}{\mu\lambda_1T}+1\right)\left\{2\left(\frac{r-q}{r+1}\right)
			\left(\frac{2(q+1)}{\beta(r+1)}\right)^{\frac{r-q}{q+1}}|\Omega|T+\frac{1}{\mu}\int_0^T\|\f(t)\|_{\V^{\prime}}^2\d t\right\},
		\end{align}
	for all $t\in[0,T]$	and the \emph{energy inequality} \eqref{ener-inequality}. 

	 Furthermore, for $r\in[1,\infty)$ when $d=2$ and  for $r\in[3,\infty)$ when $d=3$, $\bv\in\mathrm{C}([0,T];\H) $ and  the following \emph{energy equality} is satisfied: 
		\begin{align}\label{energy-equality}
			&	\|\bv(t)\|_{\H}^2+2\mu\int_0^t\|\bv(s)\|_{\V}^2\d s+2\alpha\int_0^t\|\bv(s)\|_{\H}^2\d s+2\beta\int_0^t\|\bv(s)\|_{\wi\L^{r+1}}^{r+1}\d s+2\gamma \int_0^t\|\bv(s)\|_{\wi\L^{q+1}}^{q+1}\d s \nonumber\\&=\|\bv(0)\|_{\H}^2+\int_0^t\langle\f(s),\v(s)\rangle\d s,
		\end{align}
		for all $t\in[0,T]$. 	
		
		For $r\in(3,\infty)$, if $\mu\lambda_1+2\alpha>\zeta$, where 
		\begin{align}\label{eqn-zeta}
		\small{\zeta= 2\left\{\left(\frac{1}{2\mu}\right)^{\frac{r-1}{r-3}}\left(\frac{r-3}{r-1}\right)\left(\frac{8}{\beta(r-1)}\right)^{\frac{2}{r-3}}+ \left(|\gamma|q2^{q-2}\right)^{\frac{r-1}{r-q}}\left(\frac{r-q}{r-1}\right)\left(\frac{2(q-1)}{\beta(r-1)}\right)^{\frac{q-1}{r-1}}\left[1+2^{\frac{q-1}{r-1}}\right]\right\}}
		\end{align}
		 or  if  $\left(\mu-\frac{1}{\beta}\right)\lambda_1+\alpha>\eta$ and $\beta\mu>1$, where 
		\begin{align}\label{eqn-eta}
			\eta= \left\{\frac{\beta}{4}+ \left(|\gamma|q2^{q-2}\right)^{\frac{r-1}{r-q}}\left(\frac{r-q}{r-1}\right)\left(\frac{2(q-1)}{\beta(r-1)}\right)^{\frac{q-1}{r-1}}\left[1+2^{\frac{q-1}{r-1}}\right]\right\},
			\end{align}   then the weak solution is unique.  Moreover, for $r=3$, the weak solution is unique if  $\beta\mu>1$ and $\alpha+\left(\mu-\frac{1}{\beta}\right)\lambda_1>\kappa$, where $\kappa=\left(|\gamma|q2^{q-2}\right)^{\frac{2}{3-q}}\left(\frac{3-q}{2}\right)\left(\frac{q-1}{\beta}\right)^{\frac{q-1}{2}}\left[1+2^{\frac{q-1}{2}}\right]$. 
	\end{theorem}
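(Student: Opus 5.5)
We use a Faedo--Galerkin scheme with a Brouwer fixed point for the periodic Galerkin problem, pass to the limit by compactness, and prove uniqueness from an energy estimate for the difference of two solutions.

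\emph{Galerkin approximations and the periodic orbit.} Let $\mathbb{H}_n=\mathrm{span}\{\boldsymbol{w}_1,\dots,\boldsymbol{w}_n\}$, built from the Stokes eigenfunctions of Subsection \ref{sub2.2}, and let $\mathcal{P}_n$ be the orthogonal projection onto it. For $\bv_0\in\mathbb{H}_n$ we solve the projected ODE
\[\partial_t\bv_n+\mu\A\bv_n+\mathcal{P}_n\B(\bv_n)+\alpha\bv_n+\beta\mathcal{P}_n\mathcal{C}(\bv_n)+\gamma\mathcal{P}_n\widetilde{\mathcal{C}}(\bv_n)=\mathcal{P}_n\f,\qquad \bv_n(0)=\bv_0.\]
The nonlinearities are locally Lipschitz by \eqref{2b7} and \eqref{213}, so local solutions exist. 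Test with $\bv_n$ and use $b(\bv_n,\bv_n,\bv_n)=0$.

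The pumping term is absorbed by Young's inequality:
\[|\gamma|\,|\bv|^{q+1}\le \beta\,|\bv|^{r+1}/2+C_{\beta,\gamma,q,r},\]
whose integral over $\Omega$ gives the constant times $|\Omega|$ appearing in \eqref{p36}. For $\langle\f,\bv_n\rangle$ we use $\|\f\|_{\V'}\|\bv_n\|_{\V}\le \frac{\mu}{2}\|\bv_n\|_{\V}^2+\frac{1}{2\mu}\|\f\|_{\V'}^2$. With the Poincar\'e inequality \eqref{poin} this yields
\[\frac{d}{dt}\|\bv_n\|_{\H}^2+\mu\lambda_1\|\bv_n\|_{\H}^2\le K(t),\qquad \int_0^T K\,\d t=K_0<\infty.\]
Gronwall then gives
\[\|\bv_n(T)\|_{\H}^2\le e^{-\mu\lambda_1T}\|\bv_0\|_{\H}^2+K_0 .\]
Choose $R^2=K_0/(1-e^{-\mu\lambda_1T})$. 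The Poincar\'e map $\bv_0\mapsto\bv_n(T)$ is continuous (by continuous dependence for the ODE) and maps the closed ball of radius $R$ in $\mathbb{H}_n$ into itself. Brouwer's fixed point theorem therefore produces a periodic Galerkin solution with $\bv_n(0)=\bv_n(T)$.

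Integrating the energy identity over $[0,T]$ and using periodicity gives $\mu\int_0^T\|\bv_n\|_{\V}^2\,\d t\le K_0$. Averaging $\|\bv_n(t)\|_{\H}^2$ in $t$ and applying the Poincar\'e inequality bounds some $\|\bv_n(t_0)\|_{\H}^2$ by $K_0/(\mu\lambda_1T)$. Integrating the energy identity again from $t_0$ then produces the factor $\left(\frac{1}{\mu\lambda_1T}+1\right)$ in \eqref{p36}, uniformly in $n$.

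\emph{Passage to the limit.} From \eqref{p36} we get uniform bounds in $\mathrm{L}^\infty(0,T;\H)\cap\mathrm{L}^2(0,T;\V)\cap\mathrm{L}^{r+1}(0,T;\widetilde{\L}^{r+1})$. We then bound $\partial_t\bv_n$ in $\mathrm{L}^{p}(0,T;\V')+\mathrm{L}^{\frac{r+1}{r}}(0,T;\widetilde{\L}^{\frac{r+1}{r}})$:
\begin{itemize}
\item for $r\ge3$, use \eqref{be-est};
\item for $r<3$, use the $\mathrm{L}^4$ estimate of $\B$ via interpolation in $d$ dimensions, which gives the exponent $p$ of \eqref{value-p}.
\end{itemize}
Banach--Alaoglu gives weak and weak-$*$ limits. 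Aubin--Lions--Simon gives strong convergence in $\mathrm{L}^2(0,T;\H)$, and along a subsequence a.e. convergence. This identifies the limits of $\B(\bv_n)$, $\mathcal{C}(\bv_n)$ and $\widetilde{\mathcal{C}}(\bv_n)$ (the last two through boundedness in $\mathrm{L}^{\frac{r+1}{r}}$ together with pointwise convergence). Periodicity passes to the limit because $\bv_n(0)=\bv_n(T)$ converge weakly in $\H$. The energy inequality \eqref{ener-inequality} follows from weak lower semicontinuity, together with strong convergence of the $q$-term when $q<r$.

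\emph{Energy equality.} For $d=2$, or for $d=3$ with $r\ge3$, the estimate \eqref{be-est} (respectively the 2D Ladyzhenskaya inequality) places $\partial_t\bv$ in $\mathrm{L}^2(0,T;\V')+\mathrm{L}^{\frac{r+1}{r}}(0,T;\widetilde{\L}^{\frac{r+1}{r}})$. Theorem \ref{Thm-Abs-cont} with $\E=\widetilde{\L}^{r+1}$ then gives $\bv\in\mathrm{C}([0,T];\H)$ and \eqref{energy-equality}.

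\emph{Uniqueness (the main obstacle).} Let $\w=\bv_1-\bv_2$. The same lemma gives
\[\frac12\frac{d}{dt}\|\w\|_{\H}^2+\mu\|\w\|_{\V}^2+\alpha\|\w\|_{\H}^2+\beta\langle\mathcal{C}(\bv_1)-\mathcal{C}(\bv_2),\w\rangle=-b(\w,\bv_2,\w)-\gamma\langle\widetilde{\mathcal{C}}(\bv_1)-\widetilde{\mathcal{C}}(\bv_2),\w\rangle .\]
Use \eqref{2.23} to keep $\frac{\beta}{2}\||\bv_2|^{\frac{r-1}{2}}\w\|_{\H}^2$ and $\frac{\beta}{2}\||\bv_1|^{\frac{r-1}{2}}\w\|_{\H}^2$ on the left.

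For $r>3$, bound the convective term by
\[|b(\w,\bv_2,\w)|\le\frac{\mu}{2}\|\w\|_{\V}^2+\frac{1}{2\mu}\int|\bv_2|^2|\w|^2\,\d x .\]
Then Young's inequality
\[|\bv_2|^2\le \varepsilon|\bv_2|^{r-1}+C_\varepsilon\]
absorbs the first part into the $\beta$ term, leaving a constant times $\|\w\|_{\H}^2$; this produces the first part of $\zeta$.

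The $\gamma$ term is handled by Taylor's formula:
\[|\langle\widetilde{\mathcal{C}}(\bv_1)-\widetilde{\mathcal{C}}(\bv_2),\w\rangle|\le q2^{q-2}\int(|\bv_1|^{q-1}+|\bv_2|^{q-1})|\w|^2\,\d x ,\]
followed by Young's inequality with exponents $\frac{r-1}{q-1}$ and $\frac{r-1}{r-q}$; this gives the second part of $\zeta$.

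In the alternative regime, and for $r=3$, write instead
\[|b(\w,\bv_2,\w)|\le\|\w\|_{\V}\,\||\bv_2|\w\|_{\H}\le\frac{1}{\beta}\|\w\|_{\V}^2+\frac{\beta}{4}\||\bv_2|\w\|_{\H}^2 .\]
For $r=3$ the last term is absorbed directly, which requires $\beta\mu>1$ and yields $\kappa$. For $r>3$ it is handled via Young's inequality as before, which yields the $\frac{\beta}{4}$ in $\eta$.

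In every case we arrive at
\[\frac{d}{dt}\|\w\|_{\H}^2+\delta\|\w\|_{\H}^2\le0,\qquad \delta>0,\]
with $\delta>0$ exactly under the hypotheses on $\zeta$, $\eta$ or $\kappa$. Integrating over $[0,T]$ and using $\w(0)=\w(T)$ gives $(1-e^{-\delta T})\|\w(0)\|_{\H}^2\le0$. Hence $\w(0)=0$, and then $\w\equiv0$. The delicate points are tracking the precise constants in these Young splittings and justifying the energy identity for $\w$ at the weak-solution level.
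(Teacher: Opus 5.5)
\paragraph{Overall.} Your proposal follows the paper's proof almost step for step:
\begin{itemize}
\item the Galerkin system on the Stokes eigenfunctions, with Brouwer's theorem applied to $\bv_0\mapsto\bv_n(T)$ on a ball whose radius comes from the dissipative energy inequality;
\item uniform bounds extracted from periodicity, Aubin--Lions--Simon, and identification of the nonlinear limits;
\item Theorem~\ref{Thm-Abs-cont} for the energy equality;
\item for uniqueness, the energy inequality for $\w=\bv_1-\bv_2$, with the convective and pumping terms absorbed into $\frac{\beta}{2}\||\bv_i|^{\frac{r-1}{2}}\w\|_{\H}^2$ as in the paper.
\end{itemize}
The differences are cosmetic. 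You pick a good time $t_0$ by averaging, where the paper multiplies the Galerkin equation by $t$. Your pointwise inequality $\frac{1}{2\mu}|\bv_2|^2\le\frac{\beta}{4}|\bv_2|^{r-1}+C$ gives exactly the first constant in $\zeta$ that the paper gets from H\"older plus Young. For $d=2$, $r<3$ you apply Theorem~\ref{Thm-Abs-cont} directly, which is legitimate since Ladyzhenskaya gives $\B(\bv)\in\mathrm{L}^2(0,T;\V')$; the paper cites Galdi instead.

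\paragraph{The step that fails.} One step does not go through as written: the uniform bound on $\partial_t\bv_n$ in $\mathrm{L}^{p}(0,T;\V')+\mathrm{L}^{\frac{r+1}{r}}(0,T;\widetilde{\L}^{\frac{r+1}{r}})$.
\begin{itemize}
\item You only discuss $\B$. The Galerkin derivative also contains $\beta\mathcal{P}_n\mathcal{C}(\bv_n)$ and $\gamma\mathcal{P}_n\widetilde{\mathcal{C}}(\bv_n)$.
\item The bound on $\mathcal{C}(\bv_n)$ in $\mathrm{L}^{\frac{r+1}{r}}(0,T;\widetilde{\L}^{\frac{r+1}{r}})$ does not transfer to $\mathcal{P}_n\mathcal{C}(\bv_n)$. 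That would need $\|\mathcal{P}_n\boldsymbol{\phi}\|_{\widetilde{\L}^{r+1}}\le C\|\boldsymbol{\phi}\|_{\V\cap\widetilde{\L}^{r+1}}$ uniformly in $n$, and the Stokes spectral projections are not uniformly bounded on $\L^{r+1}$ in general.
\item For $d=2$, or $d=3$ with $r\le 5$, you can bypass this. Use $\V\hookrightarrow\L^{r+1}$ and $\|\mathcal{P}_n\boldsymbol{\phi}\|_{\V}\le\|\boldsymbol{\phi}\|_{\V}$, which bounds these terms in $\mathrm{L}^{\frac{r+1}{r}}(0,T;\V')$.
\item For $d=3$, $r>5$ no such bound is available.
\end{itemize}

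\paragraph{The fix.} The paper's device repairs this. Test with $\boldsymbol{\phi}\in\D(\A)$, and use $\|\mathrm{P}_m\boldsymbol{\phi}\|_{\D(\A)}\le\|\boldsymbol{\phi}\|_{\D(\A)}$ together with $\D(\A)\hookrightarrow\widetilde{\L}^{r+1}$ for $d\le 3$. This gives $\bv_m'$ bounded uniformly in $\mathrm{L}^{p}(0,T;(\D(\A))')$, with $p$ as in \eqref{eqn-value-p}. That is all Aubin--Lions--Simon needs, and all that passing to the limit in $\bv_n(0)=\bv_n(T)$ needs. The sum-space regularity of $\partial_t\bv$ claimed in the theorem is then read off from the limit equation. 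This is what Part (v) of the paper does, and what your energy-equality paragraph effectively does too. With this substitution your argument coincides with the paper's.

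\paragraph{A minor point on \eqref{p36}.} Your averaging argument gives $\sup_t\|\bv_n(t)\|_{\H}^2\le(\frac{1}{\mu\lambda_1T}+1)K_0$. However, integrating from $t_0$ only controls the dissipation over $[t_0,t]$. For the integrals over $[0,t]$ on the left of \eqref{p36} you must add the separate bound $K_0$, which yields the factor $\frac{1}{\mu\lambda_1T}+2$. This is also what the paper's own limit step produces.
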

	\begin{proof}
		\textbf{Step (1):} \emph{Existence of a weak solution:} 
		
		\emph{Part (i). Finite dimensional Cauchy problem:}	Let $\{\w_1, \w_2, \ldots, \w_m, \ldots\}$ be a complete orthonormal basis of $\H$ contained in $\V$; for instance, one may take the eigenfunctions of the Stokes operator. Define $\H_m$ as the $m$-dimensional subspace of $\H$ spanned by $\{\w_1, \ldots, \w_m\}$. It holds that $\H_m\subset\H_{m+1}\subset\V$. Denote by $\mathrm{P}_m$, the orthogonal projection from $\V^{\prime}$ onto $\H_m$, that is, $\mathrm{P}_m \x = \sum_{i=1}^m \langle \x, \w_i \rangle \w_i.$
		Since each element $\x \in \H$ naturally defines a functional $\x^* \in \H$ through
		$\langle \x^*, \bv \rangle = (\x, \bv), \ \text{ for all }\  \bv \in \V,$
		the restriction of $\mathrm{P}_m$ to $\H$ coincides with the standard orthogonal projection of $\H$ onto $\H_m$, given by $\mathrm{P}_m \x = \sum_{i=1}^m (\x, \w_i) \w_i.$ In particular, $\mathrm{P}_m$ acts as the orthogonal projection from $\H$ onto the subspace $\operatorname{span}\{\w_1, \ldots, \w_m\}$.
For $T\in(0,\infty)$ and for each $m\in\mathbb{N}$, we seek an approximate solution of the form
		$\bv_m(t) = \sum_{k=1}^m g_{km}(t) \w_k,$
		which satisfies the initial value problem corresponding to the following finite-dimensional system of ordinary differential equations (ODEs):
		\begin{equation}\label{a1}
			\left\{
			\begin{aligned}
				&(\bv_m^{\prime},\w_j)+\mu(\nabla\bv_m,\nabla
			\w_j)+(\B(\bv_m),\w_j)+\alpha(\bv_m,\w_j)+\beta(\mathcal{C}(\bv_m),\w_j)\\&\quad+\gamma(\widetilde{\mathcal{C}}(\bv_m),\w_j)=\langle\f,\w_j\rangle, \\
				&	\bv_m(0)=\bv_0\in\text{span}\{\w_1,\ldots,\w_m\},
			\end{aligned}
			\right.
		\end{equation}
		for $1\leq j\leq m$, where $g_{km}:[0,T]\to\mathbb{R},$ $k\in\{1,\ldots,m\},$ $m\in\mathbb{N}$ are real-valued functions. Observe that $\mathrm{P}_m\bv_m=\bv_m$, hence, the preceding system of ODEs can equivalently be expressed in the form
		\begin{align*}
			\bv_m^{\prime}+\mu\A\bv_m+\mathrm{P}_m\B(\bv_m)+\alpha\bv_m+\beta\mathrm{P}_m\mathcal{C}(\bv_m)+\beta\mathrm{P}_m\widetilde{\mathcal{C}}(\bv_m)=\mathrm{P}_m\f. 
		\end{align*}
		Since the operators $\B(\cdot)$ and $\mathcal{C}(\cdot)$ are locally Lipschitz continuous (see \eqref{2b7} and \eqref{213}), \eqref{a1} is a system of non-linear, locally Lipschitz differential equations for the functions $g_{km}(\cdot)$. We can supplement this system of ODEs with initial conditions by setting for all $k$, $(g_{km}(0),\w_j) = (\bv_0,\w_j)$ for $1\leq j\leq m$. Carath\'eodory’s existence theorem guarantees the existence of a unique local solution $\bv_m(t)$ to system \eqref{a1} on some interval $t \in [0, t_m]$ with $\bv_m\in\mathrm{C}([0,t_m];\H_m)$. This proves the (local) existence of the function $\bv_m$. To show that this solution actually exists on the whole interval $[0, T]$, we  derive a uniform energy estimate for $\bv_m$. Multiplying equation \eqref{a1} by $g_{jm}(t)$ and summing over $1 \le j \le m$, we obtain
		\begin{align}\label{a2}
			&	\frac{1}{2}\frac{\d}{\d t}\|\bv_m(t)\|_{\H}^2+\mu\|\nabla\bv_m(t)\|_{\H}^2+\alpha\|\bv_m(t)\|_{\H}^2+\beta\|\bv_m(t)\|_{\wi\L^{r+1}}^{r+1}\nonumber\\&=-\kappa\|\bv_m(t)\|_{\wi\L^{q+1}}^{q+1}+\langle\f(t),\bv_m(t)\rangle\leq|\kappa||\Omega|^{\frac{r-q}{r+1}}\|\bv_m(t)\|_{\wi\L^{r+1}}^{q+1}+\|\f(t)\|_{\V^{\prime}}\|\nabla\bv_m(t)\|_{\H}\nonumber\\&\leq\frac{\beta}{2}\|\bv_m(t)\|_{\wi\L^{r+1}}^{r+1}+\left(\frac{r-q}{r+1}\right)
			\left(\frac{2(q+1)}{\beta(r+1)}\right)^{\frac{r-q}{q+1}}|\Omega|+ \frac{\mu}{2}\|\nabla\bv_m(t)\|_{\H}^2+\frac{1}{2\mu}\|\f(t)\|_{\V^{\prime}}^2,
		\end{align}
		for a.e. $t\in[0,T]$.	Integrating from $0$ to $t$, we find 
		\begin{align}\label{a4}
			&\|\bv_m(t)\|_{\H}^2+\mu\int_0^t\|\bv_m(s)\|_{\V}^2\d s+2\alpha\int_0^t\|\bv_m(s)\|_{\H}^2\d s+\beta\int_0^t\|\bv_m(s)\|_{\wi\L^{r+1}}^{r+1}\d s\nonumber\\&\leq\|\bv_0\|_{\H}^2+2\left(\frac{r-q}{r+1}\right)
			\left(\frac{2(q+1)}{\beta(r+1)}\right)^{\frac{r-q}{q+1}}|\Omega|t+\frac{1}{\mu}\int_0^t\|\f(s)\|_{\V^{\prime}}^2\d s,
		\end{align}
		for all $t\in[0,T]$. Since $\bv_0$ is the same initial vector for every $m$, observe that the right-hand side is a constant independent of $m$. Therefore, the maximal interval of existence can be extended, allowing us to conclude that $t_m = T$.
		Using the variation of constants formula in \eqref{a2}, we obtain 
		\begin{align}\label{a5}
			\|\bv_m(t)\|_{\H}^2&\leq e^{-(\lambda_1\mu+\alpha)t}\|\bv_0\|_{\H}^2+\frac{2}{(\lambda_1\mu+\alpha)}\left(\frac{r-q}{r+1}\right)
			\left(\frac{2(q+1)}{\beta(r+1)}\right)^{\frac{r-q}{q+1}}|\Omega|\nonumber\\&\quad+\frac{1}{\mu}\int_0^te^{-(\lambda_1\mu+\alpha)(t-s)}\|\f(s)\|_{\V^{\prime}}^2\d s,
		\end{align}
		 for all $ t\in[0,T]. $
		
		\emph{Part (ii). Finite-dimensional periodic problem:} 	Next, we consider the following finite dimensional periodic problem:
		\begin{equation}\label{a6}
			\left\{
			\begin{aligned}
				&(\bv_m^{\prime},\w_j)+\mu(\nabla\bv_m,\nabla \w_j)+(\B(\bv_m),\w_j)+\alpha(\bv_m,\w_j)+\beta(\mathcal{C}(\bv_m),\w_j)\\&\quad+\gamma(\widetilde{\mathcal{C}}(\bv_m),\w_j)=(\f,\w_j), \\
				&	\bv_m(0)=\bv_m(T),
			\end{aligned}
			\right.
		\end{equation}
		for $1\leq j\leq m$. From the preceding discussion, it follows that there exists a unique solution 
		$\bv_m(\cdot)$ to the initial value problem corresponding to the  initial condition $$\bv_m(0)=\bv_0\in\text{span}\{\w_1,\ldots,\w_m\}.$$ Let us now define a mapping $$\mathcal{T}_m:\text{span}\{\w_1,\ldots,\w_m\}\to\text{span}\{\w_1,\ldots,\w_m\}\ \text{ as }\ \mathcal{T}_m(\bv_0)=\bv_m(T).$$ Clearly, the mapping $\mathcal{T}_m$ is continuous from $\text{span}\{\w_1,\ldots,\w_m\}$ to $\text{span}\{\w_1,\ldots,\w_m\}$. Let us now define $\mathcal{B}_m(R)=\left\{\bv\in\text{span}\{\w_1,\ldots,\w_m\}:\|\bv\|_{\H}\leq R\right\}.$ Our next aim is to show that there exists a positive number $R$ independent of $m$ such that $\mathcal{T}_m(\mathcal{B}_m(R))\subset \mathcal{B}_m(R)$. Let us choose $R$ as $$R^2=\frac{\frac{1}{\mu}\int_0^Te^{-(\lambda_1\mu+\alpha)(T-t)}\|\f(t)\|_{\V^{\prime}}^2\d t+\frac{1}{(\lambda_1\mu+\alpha)}\left(\frac{r-q}{r+1}\right)
			\left(\frac{2(q+1)}{\beta(r+1)}\right)^{\frac{r-q}{q+1}}|\Omega|}{1-e^{-(\lambda_1\mu+\alpha)T}}.$$ Note that $R$ is independent of $m$, and if $\|\bv_0\|_{\H}\leq R$, from \eqref{a5}, we obtain  
		\begin{align}\label{eqn-310}
			\|\bv_m(T)\|_{\H}^2&=\|\bv_m(0)\|_{\H}^2=\|\bv_0\|_{\H}^2\nonumber\\&\leq e^{-(\lambda_1\mu+\alpha)T}\|\bv_0\|_{\H}^2+\frac{1}{(\lambda_1\mu+\alpha)}\left(\frac{r-q}{r+1}\right)
			\left(\frac{2(q+1)}{\beta(r+1)}\right)^{\frac{r-q}{q+1}}|\Omega|\nonumber\\&\quad+\frac{1}{\mu}\int_0^Te^{-(\lambda_1\mu+\alpha)(T-t)}\|\f(t)\|_{\V^{\prime}}^2\d t\nonumber\\&\leq e^{-(\lambda_1\mu+\alpha)T}R^2+R^2(1-e^{-(\lambda_1\mu+\alpha)T})=R^2. 
		\end{align}
		Thus, we have $\|\mathcal{T}_m\bv_0\|_{\H}=\|\bv_m(T)\|_{\H}\leq R$ and $\mathcal{T}_m(\mathcal{B}_m(R))\subset\mathcal{B}_m(R)$. Applying Brouwer's fixed point theorem (\cite[Theorem 3, Page 436]{LCE}), there exists $\bv_0\in\mathrm{span}\{\w_1,\ldots,\w_m\}$ such that $\mathcal{T}_m(\bv_0)=\bv_0$. Let $\bv_m(\cdot)$ be the solution of problem  \eqref{a1} with $\bv_m(0)=\bv_0$. Then $\bv_m(\cdot)$ is a periodic solution to \eqref{a6}. From \eqref{eqn-310}, we infer that  $\|\bv_m(0)\|_{\H}\leq R$, for all $m$.

			\emph{Part (iii). Uniform energy estimates:} 	
		Using a calculation similar to \eqref{a4} yields 
		\begin{align}\label{b1}
			&\|\bv_m(t)\|_{\H}^2+\mu\int_0^t\|\bv_m(s)\|_{\V}^2\d s+2\alpha\int_0^t\|\bv_m(s)\|_{\H}^2\d s+\beta\int_0^t\|\bv_m(s)\|_{\wi\L^{r+1}}^{r+1}\d s\nonumber\\&\leq\|\bv_m(0)\|_{\H}^2+2\left(\frac{r-q}{r+1}\right)
			\left(\frac{2(q+1)}{\beta(r+1)}\right)^{\frac{r-q}{q+1}}|\Omega|t+\frac{1}{\mu}\int_0^t\|\f(s)\|_{\V^{\prime}}^2\d s,
		\end{align}
		for all $t\in[0,T]$. The periodicity condition $\bv_m(0)=\bv_m(T)$ implies 
		\begin{align}\label{b2}
			&\mu\int_0^T\|\bv_m(t)\|_{\V}^2\d t+2\alpha\int_0^T\|\bv_m(t)\|_{\H}^2\d t+\beta\int_0^T\|\bv_m(t)\|_{\wi\L^{r+1}}^{r+1}\d t\nonumber\\&\leq 2\left(\frac{r-q}{r+1}\right)
			\left(\frac{2(q+1)}{\beta(r+1)}\right)^{\frac{r-q}{q+1}}|\Omega|T+\frac{1}{\mu}\int_0^T\|\f(t)\|_{\V^{\prime}}^2\d t=:\mathcal{K}.
		\end{align}
		Multiplying \eqref{a1} with $tg_{jm}(t)$ and summing up with respect to $1\leq j\leq m$ and then integrating it from $0$ to $t$, we find 
		\begin{align}\label{b3}
			&t\|\bv_m(t)\|_{\H}^2+2\mu\int_0^ts\|\bv_m(s)\|_{\V}^2\d s+2\alpha\int_0^ts\|\bv_m(s)\|_{\H}^2\d s+2\beta\int_0^ts\|\bv_m(s)\|_{\widetilde{\L}^{r+1}}^{r+1}\d s\nonumber\\&=\int_0^t\|\bv_m(s)\|_{\H}^2\d s+2\int_0^ts\langle\f(s),\bv_m(s)\rangle\d s-2\kappa\int_0^ts\|\bv_m(s)\|_{\widetilde{\L}^{q+1}}^{q+1}\d s\nonumber\\&\leq\frac{1}{\lambda_1}\int_0^T\|\bv_m(t)\|_{\V}^2\d t+\mu\int_0^ts\|\bv_m(s)\|_{\V}^2\d s+\frac{1}{\mu}\int_0^ts\|\f(s)\|_{\V^{\prime}}^2\d s
			\nonumber\\&\quad+\beta \int_0^ts\|\bv_m(s)\|_{\widetilde{\L}^{r+1}}^{r+1}\d s +\left(\frac{r-q}{r+1}\right)
			\left(\frac{2(q+1)}{\beta(r+1)}\right)^{\frac{r-q}{q+1}}|\Omega|t^2. 
		\end{align}
		Therefore, using  \eqref{b2} in  \eqref{b3}, we deduce 
		\begin{align*}
			t\|\bv_m(t)\|_{\H}^2\leq \left(\frac{1}{\mu\lambda_1}+T\right)\mathcal{K}.
		\end{align*}
	for all $t\in[0,T]$.	Since $\bv_m(0)=\bv_m(T)$, from the above estimate, we obtain 
		\begin{align*}
			\|\bv_m(0)\|_{\H}^2=\|\bv_m(T)\|_{\H}^2\leq \left(\frac{1}{\mu\lambda_1T}+1\right)
			\mathcal{K}.
		\end{align*}
		It is immediate from \eqref{b1} that 
		\begin{align}\label{b6}
			&\sup_{t\in[0,T]}\|\bv_m(t)\|_{\H}^2\leq \left(\frac{1}{\mu\lambda_1T}+1\right)
			\mathcal{K}. 
		\end{align}
	Along with  the estimate \eqref{b2}, we infer 
		\begin{align}\label{eqn-uni-bound}
			\{\bv_m\}_{m\in\mathbb{N}} \ \text{ is uniformly bounded in }\ \mathrm{L}^{\infty}(0,T;\H)\cap\mathrm{L}^2(0,T;\V)\cap\mathrm{L}^{r+1}(0,T;\wi\L^{r+1}).
		\end{align}
		Our next aim is to obtain a uniform estimate  on the time derivative.  Observe that for $1\leq d\leq 4$, Sobolev's embedding yields  $\D(\A)\hookrightarrow\V\cap\L^p$ for all $1\leq p<\infty$.  Let us fix any $\boldsymbol{\phi}\in\D(\A)$ with $\|\v\|_{\D(\A)}\leq 1$, and we write $\boldsymbol{\phi}=\boldsymbol{\phi}^1+\boldsymbol{\phi}^2$, where $\boldsymbol{\phi}^1\in\mathrm{span}\{\w_1,\ldots,\w_m\}$ and $(\boldsymbol{\phi}^2,\w_k)=0$, for $k=1,2,\ldots,m$. 	Since, $\{\w_j\}_{j\in\mathbb{N}}\subset\D(\A)$ are eigenfunctions of the Stokes operator, they are orthonormal  in $\H$ and $\|\boldsymbol{\phi}^1\|_{\D(\A)}\leq\|\boldsymbol{\phi}\|_{\D(\A)}\leq 1$. Using the Cauchy-Schwarz, H\"older, Ladyzhenskaya, Poincar\'e and Sobolev inequalities, we deduce from \eqref{a6}  that 
		\begin{align}\label{eqn-time-der}
			|\langle\bv_m^{\prime},\boldsymbol{\phi}\rangle|&=|(\bv_m^{\prime},\boldsymbol{\phi})|=|(\bv_m^{\prime},\boldsymbol{\phi}^1)|\nonumber\\&=|\langle\f,\boldsymbol{\phi}^1\rangle-[\mu(\nabla\bv_m,\nabla \boldsymbol{\phi}^1)+(\B(\bv_m),\boldsymbol{\phi}^1)+\alpha(\bv_m,\boldsymbol{\phi}^1)+\beta(\mathcal{C}(\bv_m),\boldsymbol{\phi}^1)\nonumber\\&\quad+\gamma(\widetilde{\mathcal{C}}(\bv_m),\boldsymbol{\phi}^1)]|
			\nonumber\\&\leq\|\f\|_{\V^{\prime}}\|\boldsymbol{\phi}^1\|_{\V}+\mu\|\nabla\bv_m\|_{\H}\|\nabla\boldsymbol{\phi}^1\|_{\H}+\|\bv_m\|_{\wi\L^4}^2\|\boldsymbol{\phi}^1\|_{\V}+\alpha\|\bv_m\|_{\H}\|\boldsymbol{\phi}^1\|_{\H}\nonumber\\&\quad+\beta\|\bv_m\|_{\wi\L^{r+1}}^{r}\|\boldsymbol{\phi}^1\|_{\wi\L^{r+1}}+|\gamma|\|\bv_m\|_{\wi\L^{q+1}}^q\|\boldsymbol{\phi}^1\|_{\wi\L^{q+1}}
			\nonumber\\&\leq\left(\frac{1}{\sqrt{\lambda_1}}\|\f\|_{\V^{\prime}}+\frac{\mu}{\sqrt{\lambda_1}}\|\bv_m\|_{\V}+\frac{\alpha}{\lambda_1}\|\bv_m\|_{\H}+2^{\frac{d}{2}}\|\v_m\|_{\H}^{\frac{4-d}{2}}\|\v_m\|_{\V}^{\frac{d}{2}}\right.\nonumber\\&\qquad\left.+C\beta\|\bv_m\|_{\wi\L^{r+1}}^{r}+C|\gamma||\Omega|^{\frac{r-q}{r+1}}\|\bv_m\|_{\wi\L^{r+1}}^{q}\right)\|\boldsymbol{\phi}^1\|_{\D(\A)},
		\end{align}
		where $\lambda_1$ is the first eigenvalue of the Dirichlet Laplacian. For $r\geq 1,$ when $d=2$ and  $r\geq 3,$ when $d=3$, we immediately have 
		\begin{align*}
			\int_0^T\|\bv_m^{\prime}(t)\|_{(\D(\A))^{\prime}}^{\frac{r+1}{r}}\d t&\leq \left(\frac{T^{\frac{r-1}{2r}}}{\lambda_1^{\frac{r+1}{2r}}}\|\f\|_{\mathrm{L}^2(0,T;\V^{\prime})}^{\frac{r+1}{r}}+\frac{\mu T^{\frac{r-1}{2r}}}{\lambda_1^{\frac{r+1}{2r}}}\|\bv_m\|_{\mathrm{L}^2(0,T;\V)}^{\frac{r+1}{r}}+\alpha^{\frac{r+1}{r}} T\|\bv_m\|_{\mathrm{L}^{\infty}(0,T;\H)}^{\frac{r+1}{r}}\right.\nonumber\\&\quad+\left.  2^{\frac{d(r+1)}{2r}} T^{\frac{(4-d)r-d}{4r}}\|\bv_m\|_{\mathrm{L}^{\infty}(0,T;\H)}^{\frac{(4-d)(r+1)}{2r}}\|\bv_m\|_{\mathrm{L}^2(0,T;\V)}^{\frac{d(r+1)}{2r}} +C\beta^{\frac{r+1}{r}}\|\bv_m\|_{\mathrm{L}^{r+1}(0,T;\wi\L^{r+1})}^{r+1}\right.\nonumber\\&\quad\left.+C|\gamma|^{\frac{r+1}{r}}|\Omega|^{\frac{r-q}{r}}T^{\frac{r-q}{r}}\|\bv_m\|_{\mathrm{L}^{r+1}(0,T;\wi\L^{r+1})}^{\frac{(r+1)q}{r}}\right),
		\end{align*}
		and the right hand side is bounded and independent of $m$ (see \eqref{b2} and \eqref{b6}).  For $r\in[1,3]$ when $d=3$, we consider 
		\begin{align*}
				\int_0^T\|\bv_m^{\prime}(t)\|_{(\D(\A))^{\prime}}^{\frac{4}{3}}\d t&\leq\left(\frac{T^{\frac{1}{3}}}{\lambda_1^{\frac{2}{3}}}\|\f\|_{\mathrm{L}^2(0,T;\V^{\prime})}^{\frac{4}{3}}+\frac{\mu T^{\frac{1}{3}}}{\lambda_1^{\frac{2}{3}}}\|\bv_m\|_{\mathrm{L}^2(0,T;\V)}^{\frac{4}{3}}+\alpha^{\frac{4}{3}} T\|\bv_m\|_{\mathrm{L}^{\infty}(0,T;\H)}^{\frac{4}{3}}\right.\nonumber\\&\quad\left. +4 \|\bv_m\|_{\mathrm{L}^{\infty}(0,T;\H)}^{\frac{2}{3}}\|\bv_m\|_{\mathrm{L}^2(0,T;\V)}+C\beta^{\frac{4}{3}} T^{\frac{3-r}{4r}} \|\bv_m\|_{\mathrm{L}^{r+1}(0,T;\wi\L^{r+1})}^{\frac{4r}{3}}\right.\nonumber\\&\quad+\left. C|\gamma|^{\frac{4}{3}} |\Omega|^{\frac{4(r-q)}{3(r+1)}} T^{\frac{3r+3-4q}{3(r+1)}} \|\bv_m\|_{\mathrm{L}^{r+1}(0,T;\wi\L^{r+1})}^{\frac{4q}{3}}\right),
		\end{align*}
		where also the right hand side is bounded and independent of $m$ (see \eqref{b2} and \eqref{b6}) .  
		
		 From the above discussions, one can conclude that 
		\begin{align}\label{eqn-uni-bound-1}
			\{\bv_m'\}_{m\in\mathbb{N}} \ \text{ is uniformly bounded in }\ \mathrm{L}^p(0,T;(\D(\A))^{\prime}),
		\end{align}
		where 
		\begin{align}\label{eqn-value-p}
			p=\left\{\begin{array}{cl}\frac{r+1}{r}, &\mbox{ for $r\geq 1,$ when $d=2$ and  $r\geq 3,$ when $d=3$},\\
			\frac{4}{3},&\mbox{ for $r\in[1,3],$ when $d=3$}.   \end{array}\right.
		\end{align}

			\emph{Part (iv). Convergence:} 	
		  Using \eqref{eqn-uni-bound} (see \eqref{b2} and \eqref{b6}) and \eqref{eqn-uni-bound-1},  application of the Banach-Alaoglu Theorem yields the existence of a subsequence still denoted by the same symbol such that 
		  	\begin{equation}\label{weak-convergence}
		  	\left\{
		  	\begin{aligned}
		  		&\bv_m\xrightarrow{w^*}\bv\ \text{ in }\ \mathrm{L}^{\infty}(0,T;\H),\\
		  		& \bv_m\xrightarrow{w}\bv\ \text{ in }\ \mathrm{L}^{2}(0,T;\V)\cap\mathrm{L}^{r+1}(0,T;\wi\L^{r+1}), 
		  	\\  &\bv_m'\xrightarrow{w}\partial_t\bv \ \text{ in }\ \mathrm{L}^p(0,T;(\D(\A))^{\prime}),
		  	\end{aligned}
		  	\right.
		  \end{equation}
	where $p$ is defined in \eqref{eqn-value-p}. Observe that 
	\begin{align*}
	\mbox{$\int_0^T\|\mathcal{C}(\bv_m(t)) \|_{\wi\L^{\frac{r+1}{r}}}^{\frac{r+1}{r}}\d t\leq C\int_0^T\|\bv_m(t)\|_{\wi\L^{r+1}}^{r+1}\d t$ and $\int_0^T\|\widetilde{\mathcal{C}}(\bv_m(t)) \|_{\wi\L^{\frac{q+1}{q}}}^{\frac{q+1}{q}}\d t\leq C\int_0^T\|\bv_m(t)\|_{\wi\L^{q+1}}^{q+1}\d t$}
	\end{align*} are bounded independent of $m$. Therefore, we have the following convergences: 
	\begin{align}\label{weak-convergence-1}
		&\mathcal{C}(\bv_m) \xrightarrow{w} \ \boldsymbol{\xi}\ \text{ in }\ \mathrm{L}^{\frac{r+1}{r}}(0,T;\wi\L^{\frac{r+1}{r}}), \quad \widetilde{\mathcal{C}}(\bv_m) \xrightarrow{w} \ \widetilde{\boldsymbol{\xi}}\ \text{ in }\ \mathrm{L}^{\frac{q+1}{q}}(0,T;\wi\L^{\frac{q+1}{q}}).
	\end{align}
	Since $\D(\A)\hookrightarrow\V\cap\wi\L^{r+1}\hookrightarrow\H\equiv\H^{\prime}\hookrightarrow\V^{\prime}+\wi\L^{\frac{r+1}{r}}\hookrightarrow (\D(\A))^{\prime}$, and the embedding of $\D(\A)\hookrightarrow\H$ is compact,   the {Aubin-Lions-Simon compactness Theorem (cf. \cite[Theorem 5]{SJ})} assures the existence of a subsequence of $\{\bv_m\}$ (still denoted by $\{\bv_m\}$) such that 
		\begin{equation}\label{eqn-strong-conv}
		\bv_m\to \bv\ \text{ in }\ \mathrm{L}^2(0,T;\H). 
		\end{equation}
		as $m\to\infty$. Moreover, along a further subsequence (using the Riesz-Fischer Theorem), we have 
		\begin{align}\label{eqn-point}
			\bv_m(t,x)\to \bv(t,x)\ \text{ for a.e. }\ (t,x)\in(0,T)\times \Omega.
		\end{align}
		Let us now take $\w\in \mathrm{C}([0,T];\H_n)$ for $n<m$ and consider
		\begin{align*}
			&\left|\int_0^Tb(\bv_m(t),\bv_m(t),\w(t))\d t-\int_0^Tb(\bv(t),\bv(t),\w(t))\d t\right|
			\nonumber\\&\leq\left|\int_0^Tb(\bv_m(t)-\bv(t),\bv_m(t),\w(t))\d t\right|+\left| \int_0^Tb(\bv(t),\bv_m(t)-\bv(t),\w(t))\d t\right|\nonumber\\&:=I_1+I_2. 
		\end{align*}
	The convergence \eqref{weak-convergence} implies that $I_2\to 0$ as $m\to\infty$ for all $n\in\mathbb{N}$.	Using H\"older's, Ladyzhneskaya's  and Young's inequalities, and the uniform bound \eqref{eqn-uni-bound} (see \eqref{b2} and \eqref{b6}),  and  the convergence  $\bv_m\to \bv\ \text{ in }\ \mathrm{L}^2(0,T;\H),$  we find 
		\begin{align*}
			I_1&=\left|\int_0^Tb(\bv_m(t)-\bv(t),\bv_m(t),\w(t))\d t\right|
			\nonumber\\&\leq\int_0^T\|\bv_m(t)-\bv(t)\|_{\wi\L^4}\|\bv_m(t)\|_{\V}\|\w(t)\|_{\wi\L^4}\d t\nonumber\\&\leq 2^{\frac{d}{2}}\sup_{t\in[0,T]}\|\w(t)\|_{\wi\L^4}\int_0^T\|\bv_m(t)-\bv(t)\|_{\H}^{1-\frac{d}{4}}\|\bv_m(t)-\bv(t)\|_{\V}^{\frac{d}{4}}\|\bv_m(t)\|_{\V}\d t\nonumber\\&\leq C\sup_{t\in[0,T]}\|\w(t)\|_{\wi\L^4}\left(\int_0^T\|\bv_m(t)-\bv(t)\|_{\H}^2\d t\right)^{\frac{d-4}{8}}\left(\int_0^T\left(\|\bv_m(t)\|_{\V}^2+\|\bv(t)\|_{\V}^{2}\right)\d t\right)^{\frac{4+d}{8}}\nonumber\\&\to 0\ \text{ as } \ m\to\infty, 
			\end{align*}
	for all $n\in\mathbb{N}$. 	Since $ \bv_m\xrightarrow{w}\bv\ \text{ in }\ \mathrm{L}^{2}(0,T;\V)$, one easily gets 
		\begin{align*}
		I_1=\left|	\int_0^Tb(\bv(t),\bv_m(t)-\bv(t),\w(t))\d t\right|\to 0\ \text{ as } m\to\infty,
		\end{align*}
		since $\bv\in\mathrm{L}^{\frac{8}{d}}(0,T;\wi\L^4)$ and it is true for all $n\in\mathbb{N}$.  Since the above convergence is independent of $n$ and  the embedding of $\mathrm{C}([0,T];\H_n)\subset \mathrm{L}^2(0,T;\V)$ is dense,  a density argument  yields the convergence 
		\begin{align*}
			\int_0^Tb(\bv_m(t),\bv_m(t),\boldsymbol{\varphi}(t))\d t\to \int_0^Tb(\bv(t),\bv(t),\boldsymbol{\varphi}(t))\d t,
		\end{align*}
		for all $\boldsymbol{\varphi}\in \mathrm{L}^2(0,T;\V)$.  Owing to the bound \eqref{b2}, the convergence \eqref{eqn-point} and {(\cite[Lemma 1.3]{lions-1})}, we obtain
		\begin{align}\label{absorption cgts}
			|\bv_m|^{r-1}\bv_m & \xrightarrow{w} |\bv|^{r-1}\bv \ \text{ in }  \ \mathrm{L}^{\frac{r+1}{r}}(0, T; \wi{\mathbb{L}}^{\frac{r +1}{r}}).
		\end{align}
	We need to show that $\boldsymbol{\xi}=\mathcal{P}( |\bv|^{r-1}\bv)=\mathcal{C}(\bv)$.	For all $\boldsymbol{\varphi}\in\mathrm{L}^{r+1}(0,T;\H_n)$ for $n<m$, we consider 
	\begin{align*}
&	\left|	\int_0^T\langle\mathcal{C}(\bv_m(t))-\mathcal{C}(\bv(t)),\boldsymbol{\varphi}(t)\rangle\d t\right|
\nonumber\\&= \left|	\int_0^T\langle 	|\bv_m(t)|^{r-1}\bv_m(t) -	|\bv(t)|^{r-1}\bv(t) ,\mathcal{P}\boldsymbol{\varphi}(t)\rangle\d t\right|\to 0\ \text{ as }\ m\to\infty, 
	\end{align*}
	by using the convergence given in \eqref{absorption cgts} and it holds true for all $n\in\mathbb{N}$. 
	Since the embedding of $\mathrm{C}([0,T];\H_n)\subset \mathrm{L}^2(0,T;\D(\A))$ is dense,  a density argument  yields the convergence 
	\begin{align*}
	\int_0^T\langle\mathcal{C}(\bv_m(t)),\boldsymbol{\varphi}(t)\rangle\d t\to \int_0^T\langle\mathcal{C}(\bv(t)),\boldsymbol{\varphi}(t)\rangle\d t\ \text{ as }\ m\to\infty, 
	\end{align*}
	for all $\boldsymbol{\varphi}\in \mathrm{L}^2(0,T;\D(\A))$. Since $\D(\A)\hookrightarrow \wi\L^{r+1}$ is dense, the same result holds true for all  $\boldsymbol{\varphi}\in \mathrm{L}^{r+1}(0,T;\wi\L^{r+1})$.   By the uniqueness of weak limits (see \eqref{weak-convergence-1}), we obtain $\mathcal{C}(\bv)=\boldsymbol{\xi}$. Let us now show that 
	\begin{align}\label{eqn-nonlinear-storng}
		\widetilde{\mathcal{C}}(\bv_m)\to 	\widetilde{\mathcal{C}}(\bv)  \ \text{ in }\ \mathrm{L}^{\frac{q+1}{q}}(0,T;\wi\L^{\frac{q+1}{q}})\ \text{ as }\ m\to\infty.
	\end{align}
In order to establish the above convergence, it is enough to show 
\begin{align}\label{nonlinear-storng}	|\bv_m|^{q-1}\bv_m \to |\bv|^{q-1}\bv \ \text{ in }  \ \mathrm{L}^{\frac{q+1}{q}}(0, T; \wi{\mathbb{L}}^{\frac{q+1}{q}}).\end{align} We consider 
	\begin{align*}
&\int_0^T\||\bv_m(t)|^{q-1}\bv_m(t)-|\bv(t)|^{q-1}\bv(t)\|_{\wi\L^{\frac{q+1}{q}}}^{\frac{q+1}{q}}\d t
		\nonumber\\&\leq  C\int_0^T\||\bv_m(t)|^{q-1}(\bv_m(t)-\bv(t))\|_{\wi\L^{\frac{q+1}{q}}}^{\frac{q+1}{q}}\d t+ C\int_0^T\|\left(|\bv_m(t)|^{q-1}-|\bv(t)|^{q-1}\right)\bv(t)\|_{\wi\L^{\frac{q+1}{q}}}^{\frac{q+1}{q}}\d t
	\nonumber\\&:=C{(I_3+I_4)}.
	\end{align*}
	Using the uniform estimate \eqref{eqn-uni-bound} (see \eqref{b2} and \eqref{b6}), the H\"older and interpolation inequalities, and the strong convergence \eqref{eqn-strong-conv}, we arrive at
	\begin{align*}
	{	I_3}&=\int_0^T\||\bv_m(t)|^{q-1}(\bv_m(t)-\bv(t))\|_{\wi\L^{\frac{q+1}{q}}}^{\frac{q+1}{q}}\d t
	\nonumber\\&\leq C\int_0^T\|\bv_m(t)\|_{\wi\L^{q+1}}^{\frac{(q-1)(q+1)}{q}}\|\bv_m(t)-\bv(t)\|_{\wi\L^{q+1}}^{\frac{q+1}{q}}\d t 	\nonumber\\&\leq C\int_0^T\|\bv_m(t)\|_{\wi\L^{q+1}}^{\frac{(q-1)(q+1)}{q}}\|\bv_m(t)-\bv(t)\|_{\wi\L^{r+1}}^{\frac{(q-1)(r+1)}{q(r-1)}}\|\bv_m(t)-\bv(t)\|_{\H}^{\frac{2(r-q)}{q(r-1)}}\d t
	\nonumber\\&\leq C\left(\int_0^T\|\bv_m(t)\|_{\wi\L^{q+1}}^{q+1}\d t\right)^{\frac{q-1}{q}}\left(\int_0^T\|\bv_m(t)-\bv(t)\|_{\H}^2\d t\right)^{\frac{r-q}{q(r-1)}}\nonumber\\&\quad\times\left(\int_0^T\left(\|\bv_m(t)\|_{\wi\L^{r+1}}^{r+1}+\|\bv(t)\|_{\wi\L^{r+1}}^{r+1}\right)\d t\right)^{\frac{q-1}{q(r-1)}}
	\nonumber\\&\to 0\ \text{ as }\ m\to\infty. 
	\end{align*}
	Similar arguments as above yield
	\begin{align*}
	{I_4}&=\int_0^T\|\left(|\bv_m(t)|^{q-1}-|\bv(t)|^{q-1}\right)\bv(t)\|_{\wi\L^{\frac{q+1}{q}}}^{\frac{q+1}{q}}\d t\nonumber\\&\leq  	\int_0^T\left\|\int_0^1\frac{d}{d\theta}\left|\theta\bv_m(t)+(1-\theta)\bv(t)\right|^{q-1}\d\theta\right\|_{\wi\L^{\frac{q+1}{q-1}}}^{\frac{q+1}{q}}\|\bv(t)\|_{\wi\L^{q+1}}^{\frac{q+1}{q}}\d t\nonumber\\&\leq (q-1) \int_0^T\left\|\int_0^1|\theta\bv_m(t)+(1-\theta)\bv(t)|^{q-2}|\bv_m(t)-\bv(t)|\d\theta\right\|_{\wi\L^{\frac{q+1}{q-1}}}^{\frac{q+1}{q}}\|\bv(t)\|_{\wi\L^{q+1}}^{\frac{q+1}{q}}\d t\nonumber\\&\leq C\int_0^T\left(\|\bv_m(t)\|_{\wi\L^{q+1}}+\|\bv(t)\|_{\wi\L^{q+1}}\right)^{\frac{(q-2)(q+1)}{q}}\|\bv_m(t)-\bv(t)\|_{\wi\L^{q+1}}^{\frac{q+1}{q}}\|\bv(t)\|_{\wi\L^{q+1}}^{\frac{q+1}{q}}\d t
		\nonumber\\&\to 0\ \text{ as }\ m\to\infty. 
	\end{align*}
	Therefore, the convergence \eqref{nonlinear-storng} holds and since $\mathcal{P}:\wi\L^{\frac{q+1}{q}}\to  \wi\L^{\frac{q+1}{q}}$ is a bounded linear operator, we immediately have \eqref{eqn-nonlinear-storng}. By the uniqueness of weak limits, we also have $\widetilde{\boldsymbol{\xi}}=\widetilde{\mathcal{C}}(\bv)$. Since the strong convergence implies the norm convergence, from \eqref{nonlinear-storng}, we deduce 
	\begin{align}\label{norm-strong}
	&\int_0^T\|\bv_m(t)\|_{\L^{q+1}}^{q+1}\d t=	\||\bv_m(t)|^{q-1}\bv_m(t)\|_{\mathrm{L}^{\frac{q+1}{q}}(0,T;\wi{\L}^{\frac{q+1}{q}})}^{\frac{q+1}{q}}\nonumber\\& \qquad \to\| |\bv(t)|^{q-1}\bv(t) \|_{\mathrm{L}^{\frac{q+1}{q}}(0,T;\wi{\L}^{\frac{q+1}{q}})}^{\frac{q+1}{q}}=\int_0^T\|\bv(t)\|_{\wi{\L}^{q+1}}^{q+1}\d t \ \text{ as }\ m\to\infty. 
	\end{align}
	
	\emph{Part (v). Weak solution and energy inequality:} 	
Passing limit $m\to\infty$ in \eqref{a6} assures that $\{\bv_m\}$ converges to a weak solution $\bv$ of the periodic problem \eqref{222}, that is, \eqref{3.13} is satisfied.  Let us now find the regularity of time derivative $\partial_t\bv$. For $r\in[3,\infty),$ we consider $\boldsymbol{\phi}\in\mathrm{L}^2(0,T;\V)\cap\mathrm{L}^{r+1}(0,T;\wi\L^{r+1})$ and using a calculation similar to \eqref{eqn-time-der} (except for the trilinear term, see \eqref{be-est}) yields 
\begin{align*}
	|\langle\partial_t\bv,\boldsymbol{\phi}\rangle | 	&\leq\|\f\|_{\V^{\prime}}\|\boldsymbol{\phi}\|_{\V}+\mu\|\nabla\bv\|_{\H}\|\nabla\boldsymbol{\phi}^1\|_{\H}+\|\bv\|_{\widetilde{\L}^{r+1}}\|\bv\|_{\widetilde{\L}^{\frac{2(r+1)}{r-1}}}\|\boldsymbol{\phi}\|_{\V}+\alpha\|\bv\|_{\H}\|\boldsymbol{\phi}\|_{\H}\nonumber\\&\quad+\beta\|\bv\|_{\wi\L^{r+1}}^{r}\|\boldsymbol{\phi}\|_{\wi\L^{r+1}}+|\gamma|\|\bv\|_{\wi\L^{q+1}}^q\|\boldsymbol{\phi}\|_{\wi\L^{q+1}}
	\nonumber\\&\leq\left(\|\f\|_{\V^{\prime}}+\mu\|\bv\|_{\V}+\frac{\alpha}{\sqrt{\lambda_1}}\|\bv\|_{\H}+\|\bv\|_{\widetilde{\L}^{r+1}}^{\frac{r+1}{r-1}}\|\bv\|_{\H}^{\frac{r-3}{r-1}}\right)\|\boldsymbol{\phi}\|_{\V}\nonumber\\&\qquad+\left(\beta\|\bv\|_{\wi\L^{r+1}}^{r}+|\gamma||\Omega|^{\frac{r-q}{r+1}}\|\bv\|_{\wi\L^{r+1}}^{q}\right)\|\boldsymbol{\phi}\|_{\wi\L^{r+1}}. 
\end{align*}
Therefore, we deduce 
\begin{align*}
&\int_0^T	|\langle\partial_t\bv(t),\boldsymbol{\phi}(t)\rangle | \d t
\nonumber\\&\leq \left(\|\f\|_{\mathrm{L}^2(0,T;\V^{\prime})}+\mu\|\bv\|_{\mathrm{L}^2(0,T;\V)}+\frac{\alpha}{\sqrt{\lambda_1}}\|\bv\|_{\mathrm{L}^{\infty}(0,T;\H)}+T^{\frac{r-3}{2(r-1)}}\|\bv\|_{\mathrm{L}^{\infty}(0,T;\H)}^{\frac{r-3}{r-1}}\|\bv\|_{\mathrm{L}^{r+1}(0,T;\wi\L^{r+1})}^{\frac{r+1}{r-1}}\right)\nonumber\\&\quad\times\|\boldsymbol{\phi}\|_{\mathrm{L}^2(0,T;\V)}+\left(\beta\|\bv\|_{\mathrm{L}^{r+1}(0,T;\wi\L^{r+1})}^r+|\gamma||\Omega|^{\frac{r-q}{r+1}}T^{\frac{r-q}{r+1}}\|\bv\|_{\mathrm{L}^{r+1}(0,T;\wi\L^{r+1})}^q\right) \|\boldsymbol{\phi}\|_{\mathrm{L}^{r+1}(0,T;\wi\L^{r+1})},
\end{align*}
for all $\boldsymbol{\phi}\in\mathrm{L}^2(0,T;\V)\cap\mathrm{L}^{r+1}(0,T;\wi\L^{r+1})$. Hence, we get  $\partial_t\bv\in \mathrm{L}^2(0,T;\V^{\prime})+\mathrm{L}^{\frac{r+1}{r}}(0,T;\wi\L^{\frac{r+1}{r}})$. For $r\in[1,3)$, we consider $\boldsymbol{\phi}\in\mathrm{L}^{\frac{4}{4-d}}(0,T;\V)\cap\mathrm{L}^{r+1}(0,T;\wi\L^{r+1}),$  and using a calculation similar to \eqref{eqn-time-der} provides 
\begin{align*}
	|\langle\partial_t\bv,\boldsymbol{\phi}\rangle | 	&\leq\left(\|\f\|_{\V^{\prime}}+\mu\|\bv\|_{\V}+\frac{\alpha}{\sqrt{\lambda_1}}\|\bv\|_{\H}+2^{\frac{d}{2}}\|\bv\|_{\H}^{\frac{4-d}{2}}\|\bv\|_{\V}^{\frac{d}{2}}\right)\|\boldsymbol{\phi}\|_{\V}\nonumber\\&\qquad+\left(\beta\|\bv\|_{\wi\L^{r+1}}^{r}+|\gamma||\Omega|^{\frac{r-q}{r+1}}\|\bv\|_{\wi\L^{r+1}}^{q}\right)\|\boldsymbol{\phi}\|_{\wi\L^{r+1}}. 
\end{align*}
Thus, it is immediate that 
\begin{align*}
	&\int_0^T	|\langle\partial_t\bv(t),\boldsymbol{\phi}(t)\rangle | \d t
	\nonumber\\&\leq \left(\|\f\|_{\mathrm{L}^2(0,T;\V^{\prime})}+\mu\|\bv\|_{\mathrm{L}^2(0,T;\V)}+\frac{\alpha}{\sqrt{\lambda_1}}\|\bv\|_{\mathrm{L}^{\infty}(0,T;\H)}\right)\|\boldsymbol{\phi}\|_{\mathrm{L}^2(0,T;\V)}\nonumber\\&\quad+2^{\frac{d}{2}}\|\bv\|^{\frac{4-d}{2}}_{\mathrm{L}^{\infty}(0,T;\H)}\|\bv\|_{\mathrm{L}^2(0,T;\V)}^{\frac{d}{2}}\|\boldsymbol{\phi}\|_{\mathrm{L}^{\frac{4}{4-d}}(0,T;\V)}\nonumber\\&\quad+\left(\beta\|\bv\|_{\mathrm{L}^{r+1}(0,T;\wi\L^{r+1})}^r+|\gamma||\Omega|^{\frac{r-q}{r+1}}T^{\frac{r-q}{r+1}}\|\bv\|_{\mathrm{L}^{r+1}(0,T;\wi\L^{r+1})}^q\right) \|\boldsymbol{\phi}\|_{\mathrm{L}^{r+1}(0,T;\wi\L^{r+1})},
\end{align*}
for all $\boldsymbol{\phi}\in\mathrm{L}^{\frac{4}{4-d}}(0,T;\V)\cap\mathrm{L}^{r+1}(0,T;\wi\L^{r+1})$. Hence, we get  $\partial_t\bv\in \mathrm{L}^{\frac{4}{d}}(0,T;\V^{\prime})+\mathrm{L}^{\frac{r+1}{r}}(0,T;\wi\L^{\frac{r+1}{r}})$.

 The regularity  $\bv\in\mathrm{L}^{\infty}(0,T;\H)\cap\mathrm{L}^2(0,T;\V)\cap\mathrm{L}^{r+1}(0,T;\widetilde\L^{r+1}))$ and  $\partial_t\bv\in\mathrm{L}^{p}(0,T;\mathbb{V}')+\mathrm{L}^{\frac{r+1}{r}}(0,T;\widetilde\L^{\frac{r+1}{r}})\hookrightarrow \mathrm{L}^{\min\left\{p,\frac{r+1}{r}\right\}}(0,T;\mathbb{V}'+\widetilde\L^{\frac{r+1}{r}}) ,$ where $p$ is defined in \eqref{value-p} imply $\bv\in\W^{1,\min\left\{p,\frac{r+1}{r}\right\}}(0,T;\mathbb{V}'+\widetilde\L^{\frac{r+1}{r}})\hookrightarrow\C([0,T];\mathbb{V}'+\widetilde\L^{\frac{r+1}{r}}) $ (\cite[Theorem 2, pp. 302]{LCE}). Since $\H$ is reflexive and the embedding $\H\hookrightarrow\mathbb{V}'+\widetilde\L^{\frac{r+1}{r}}$ is continuous,   therefore, by an application of \cite[Lemma 8.1, Chapter 3, pp. 275]{JLL+EM-I-72} (also see \cite[Proposition 1.7.1, Chapter 1, pp. 61]{PCAM}) yields $\bv\in\C_w([0,T];\H)$. Therefore, the periodic condition is satisfied in the sense that
 \begin{align*} 
 	\mbox{
 	$(\bv(T),\boldsymbol{\varphi})=(\bv(0),\boldsymbol{\varphi})$ for all $\boldsymbol{\varphi}\in\H$, that is, $(\bv(T)-\bv(0),\boldsymbol{\varphi})=0$ for all $\boldsymbol{\varphi}\in\H.$ }
 	\end{align*}
By 	taking $\boldsymbol{\varphi}= \bv(T)-\bv(0)$, we obtain 
 \begin{align*}
 \mbox{$\bv(T)=\bv(0)$ in $\H$. }
\end{align*}

Using the weakly lowersemicontinuity property of norms and the uniform bounds \eqref{b1} and \eqref{b6}, we further have 
\begin{align*}
	&\|\bv(t)\|_{\H}^2+\mu\int_0^t\|\bv(s)\|_{\V}^2\d s+\beta\int_0^t\|\bv(s)\|_{\wi\L^{r+1}}^{r+1}\d s\nonumber\\&\leq \liminf_{m\to\infty}\left[
	\|\bv_m(t)\|_{\H}^2+\mu\int_0^t\|\bv_m(s)\|_{\V}^2\d s+\beta\int_0^t\|\bv_m(s)\|_{\wi\L^{r+1}}^{r+1}\d s\right]\leq \left(\frac{1}{\mu\lambda_1T}+2\right)
	\mathcal{K},
\end{align*}
	for all $t\in[0,T]$. We infer from \eqref{a2} that 
	\begin{align}\label{eqn-finite-ener}
		&\|\bv_m(t)\|_{\H}^2+2\mu\int_0^t\|\bv_m(s)\|_{\V}^2\d s+2\alpha\int_0^t\|\bv_m(s)\|_{\H}^2\d s+2\beta\int_0^t\|\bv_m(s)\|_{\wi\L^{r+1}}^{r+1}\d s\nonumber\\&\quad+2\gamma\int_0^t\|\bv_m(s)\|_{\wi\L^{q+1}}^{q+1}\d s\nonumber\\&=\|\bv_m(0)\|_{\H}^2+2\int_0^t\langle\f(s),\bv_m(s)\rangle\d s,
	\end{align}
for all $t\in[0,T]$.	The existence of \emph{Leray-Hopf weak solution}  holds true for all $r\in[1,\infty)$ and $d\in\{2,3\}$. Taking $t=T$,  using the fact that $\bv_m(0)=\bv_m(T)$ and then taking $\liminf$ on both sides of \eqref{eqn-finite-ener}, we find 
\begin{align}\label{eqn-ener-in}
&2\mu\int_0^T\|\bv(t)\|_{\V}^2\d t+2\alpha\int_0^T\|\bv(t)\|_{\H}^2\d t+2\beta\int_0^T\|\bv(t)\|_{\wi\L^{r+1}}^{r+1}\d t\nonumber\\	&\leq \liminf_{m\to\infty}\bigg\{2\mu\int_0^T\|\bv_m(t)\|_{\V}^2\d t+2\alpha\int_0^T\|\bv_m(t)\|_{\H}^2\d t+2\beta\int_0^T\|\bv_m(t)\|_{\wi\L^{r+1}}^{r+1}\d t\bigg\}\nonumber\\&\nonumber\\&= \liminf_{m\to\infty}\bigg\{2\int_0^T\langle\f(t),\bv_m(t)\rangle\d t-2\gamma\int_0^t\|\bv_m(t)\|_{\wi\L^{q+1}}^{q+1}\d t\bigg\}
\nonumber\\&=2\int_0^T\langle\f(t),\bv(t)\rangle\d t-2\gamma\int_0^T\|\bv(t)\|_{\wi\L^{q+1}}^{q+1}\d t,
\end{align}
	where we have used the strong convergence  \eqref{norm-strong} also. Therefore, the energy inequality \eqref{ener-inequality}  follows from \eqref{eqn-ener-in}. Moreover the estimate \eqref{p36} is also satisfied.

		\vskip 0.1 cm
	\noindent
		\textbf{Step (2):} \emph{Energy equality:} 
	Let us now consider the case $r\in[1,\infty)$ when $d=2$ and $r\in[3,\infty)$ when $d=3$. We show that $\v(\cdot)$  satisfies the energy equality. It should be emphasized that this equality does not follow immediately. To derive it, one can adopt the approximation procedure introduced in \cite{CLF}. In that work, the authors constructed a sequence of approximations to  $\bv(\cdot)$ on bounded domains, where the approximating functions are uniformly bounded and converge simultaneously in both Sobolev and Lebesgue spaces.	Using the techniques  in \cite{CLF,galdi,MTM7}, we infer that the solution $\bv\in\C([0,T];\H)$ and satisfies the energy equality  	\begin{align*}
			&\|\bv(t)\|^2_{\H}+2\mu \int_0^t\|\bv(s)\|^2_{\V}\d s+2\alpha\int_0^t\|\bv(s)\|^2_{\H}\d s+2\beta\int_0^t\|\bv(s)\|_{\widetilde{\L}^{r+1}}^{r+1}\d s+2\gamma\int_0^t\|\bv(s)\|_{\widetilde{\L}^{q+1}}^{q+1}\d s\nonumber\\&=\|\bv(0)\|_{\H}^2+2\int_0^t\langle\f(s),\bv(s)\rangle\d s,
		\end{align*}
		for all $t\in[0,T]$. For $r\in[3,\infty),$ owing to the regularity  $\bv\in\mathrm{L}^{\infty}(0,T;\H)\cap\mathrm{L}^2(0,T;\V)\cap\mathrm{L}^{r+1}(0,T;\widetilde\L^{r+1}))$ and  $\partial_t\bv\in\mathrm{L}^{2}(0,T;\mathbb{V}')+\mathrm{L}^{\frac{r+1}{r}}(0,T;\widetilde\L^{\frac{r+1}{r}}),$ we obtain the energy equality by an application of Theorem \ref{Thm-Abs-cont}. The case of $r\in[1,3)$ when $d=2$ is immediate (\cite[Theorem 4.1]{galdi}) since $\mathrm{L}^{\infty}(0,T;\H)\cap\mathrm{L}^2(0,T;\V)\hookrightarrow\mathrm{L}^4(0,T;\wi\L^4)$ by an application of the Ladyzhenskaya inequality.

		\vskip 0.1 cm
		\noindent\textbf{Step (3):} \emph{Uniqueness.} 
		Let us now show the uniqueness of weak solutions. We assume that $\bv_1(\cdot)$ and $\bv_2(\cdot)$ are two weak solutions of problem \eqref{222}. Then, $\bv(\cdot)=\bv_1(\cdot)-\bv_2(\cdot)$ satisfies: 
		\begin{equation}\label{a15}
			\left\{
			\begin{aligned}
				\partial_t\bv(t)+\mu\A\bv(t)+(\B(\bv_1(t))-\B(\bv_2(t)))&+\alpha\bv(t)+\beta(\mathcal{C}(\bv_1(t))-\mathcal{C}(\bv_2(t)))\\+\gamma(\widetilde{\mathcal{C}}(\bv_1(t))-\widetilde{\mathcal{C}}(\bv_2(t)))&=\mathbf{0},  \ \text{ in }\ \V^{\prime}+\wi\L^{\frac{r+1}{r}},\\
				\bv(0)&=\bv(T),
			\end{aligned}
			\right.
		\end{equation}
		for a.e. $t\in[0,T]$, where $\bv_1,\bv_2\in\mathrm{L}^{\infty}(0,T;\H)\cap\mathrm{L}^2(0,T;\V)\cap\mathrm{L}^{r+1}(0,T;\widetilde\L^{r+1})),$  with $\partial_t\bv_1,\partial_t\bv_2\in\mathrm{L}^{2}(0,T;\mathbb{V}')+\mathrm{L}^{\frac{r+1}{r}}(0,T;\widetilde\L^{\frac{r+1}{r}}).$ Using the absolutely continuity of the mapping $[0,T]\ni t\mapsto\|\bv(t)\|_{\H}^2\in\mathbb{R}$ and taking the inner product with $\bv(\cdot)$ to the first equation in \eqref{a15}, we find 
		\begin{align}\label{a16}
			&\frac{1}{2}\frac{\d}{\d t}\|\bv(t)\|_{\H}^2+\mu\|\bv(t)\|_{\V}^2+\alpha\|\bv(t)\|_{\H}^2+\beta\langle \mathcal{C}(\bv_1(t))-\mathcal{C}(\bv_2(t)),\bv(t)\rangle \nonumber\\&= -\gamma\langle \widetilde{\mathcal{C}}(\bv_1(t))-\widetilde{\mathcal{C}}(\bv_2(t)),\bv(t)\rangle-\langle\B(\bv(t),\bv_2(t)),\bv(t)\rangle,
		\end{align}
	for a.e. $t\in[0,T]$, where we have used the fact that $\langle\B(\bv_1,\bv),\bv\rangle=0$. The estimate \eqref{2.23} yields 
	\begin{align}\label{eqn-difference-0}
	\beta\langle \mathcal{C}(\bv_1)-\mathcal{C}(\bv_2),\bv\rangle 
\leq \frac{\beta}{2}\||\bv_1|^{\frac{r-1}{2}}\bv\|_{\H}^2+\frac{\beta}{2}\||\bv_2|^{\frac{r-1}{2}}\bv\|_{\H}^2. 
	\end{align}
Let us fix $\boldsymbol{\psi}(\u)=|\u|^{q-1}\u$. Then, we consider 
\begin{align}\label{eqn-difference-1}
 -&\gamma\langle \widetilde{\mathcal{C}}(\bv_1)-\widetilde{\mathcal{C}}(\bv_2),\bv\rangle\nonumber\\&= -\gamma	\langle|\bv_1|^{q-1}\bv_1-|\bv_2|^{q-1}\bv_2,\bv\rangle=\left\langle\int_0^1\frac{\d}{\d\theta}\boldsymbol{\psi}(\theta\bv_1+(1-\theta)\bv_2)\d\theta,\bv\right\rangle \nonumber\\&=-\gamma\left\langle\int_0^1\nabla\boldsymbol{\psi}(\theta\bv_1+(1-\theta)\bv_2)\cdot\bv\d\theta, \bv\right\rangle\nonumber\\&=-\gamma\left\langle\int_0^1|\theta\bv_1+(1-\theta)\bv_2|^{q-1}\bv\d\theta,\bv\right\rangle\nonumber\\&\quad+(q-1)\left\langle\int_0^1|\theta\bv_1+(1-\theta)\bv_2|^{q-3}\left((\theta\bv_1+(1-\theta)\bv_2)\cdot\bv\right)(\theta\bv_1+(1-\theta)\bv_2)\d\theta,\bv\right\rangle\nonumber\\&\leq|\gamma| q\left\langle\left(|\bv_1|+|\bv_2|\right)^{q-1}|\bv|,|\bv|\right\rangle
\leq |\gamma|q2^{q-2}\left\langle\left(|\bv_1|^{q-1}+|\bv_2|^{q-1}\right)|\bv|,|\bv|\right\rangle.
\end{align}
 By  using  H\"older's and Young's inequalities, we estimate  $ |\gamma|q2^{q-2}\langle|\bv_1|^{q-1}|\bv|,|\bv|\rangle$ as 
 \begin{align*}
 	|\gamma|q2^{q-2}\left\langle|\bv_1|^{q-1}|\bv|,|\bv|\right\rangle&=  	|\gamma|q2^{q-2}\int_{\Omega} |\bv_1(x)|^{q-1}|\bv(x)|^{\frac{2(q-1)}{(r-1)}}|\bv(x)|^{\frac{2(r-q)}{(r-1)}}\d x\nonumber\\&\leq 	|\gamma|q2^{q-2}\left(\int_{\Omega}|\bv_1(x)|^{r-1}|\bv(x)|^2\d x\right)^{\frac{q-1}{r-1}}\left(\int_{\Omega}|\bv(x)|^2\d x\right)^{\frac{r-q}{r-1}}
 	\nonumber\\&\leq\frac{\beta}{2}\||\bv_1|^{\frac{r-1}{2}}\bv\|_{\H}^2+\left(|\gamma|q2^{q-2}\right)^{\frac{r-1}{r-q}}\left(\frac{r-q}{r-1}\right)\left(\frac{2(q-1)}{\beta(r-1)}\right)^{\frac{q-1}{r-1}}\|\bv\|_{\H}^2. 
 \end{align*}
 A similar calculation yields 
 \begin{align*}
 	|\gamma|q2^{q-2}\left\langle|\bv_1|^{q-1}|\bv|,|\bv|\right\rangle&\leq \frac{\beta}{4}\||\bv_2|^{\frac{r-1}{2}}\bv\|_{\H}^2+\left(|\gamma|q2^{q-2}\right)^{\frac{r-1}{r-q}}\left(\frac{r-q}{r-1}\right)\left(\frac{4(q-1)}{\beta(r-1)}\right)^{\frac{q-1}{r-1}}\|\bv\|_{\H}^2. 
 \end{align*}
 Therefore, we deduce from \eqref{eqn-difference-1} that 
 \begin{align}\label{eqn-difference-2}
 	 -\gamma\langle \widetilde{\mathcal{C}}(\bv_1)-\widetilde{\mathcal{C}}(\bv_2),\bv\rangle&
 	\leq\frac{\beta}{2}\||\bv_1|^{\frac{r-1}{2}}\bv\|_{\H}^2+\frac{\beta}{4}\||\bv_2|^{\frac{r-1}{2}}\bv\|_{\H}^2\nonumber\\&\quad +\left(|\gamma|q2^{q-2}\right)^{\frac{r-1}{r-q}}\left(\frac{r-q}{r-1}\right)\left(\frac{2(q-1)}{\beta(r-1)}\right)^{\frac{q-1}{r-1}}\left[1+2^{\frac{q-1}{r-1}}\right]\|\bv\|_{\H}^2. 
 \end{align}
For $r>3$, we estimate the trilinear form $-\langle\B(\bv,\bv_2),\bv\rangle$ by using H\"older's and Young's inequalities as 
\begin{align*}
	-\langle\B(\bv,\bv_2),\bv\rangle&=\langle\B(\bv,\bv),\bv_2\rangle\leq\|\bv\|_{\V}\||\bv_2|\bv\|_{\H}\leq\frac{\mu}{2}\|\bv\|_{\V}^2+\frac{1}{2\mu}\|\bv_2\bv\|_{\H}^2. 
\end{align*}
Once again using H\"older's and Young's inequalities, we calculate  $\frac{1}{2\mu}\|\bv_2\bv\|_{\H}^2$ as 
\begin{align*}
	\frac{1}{2\mu}\||\bv_2|\bv\|_{\H}^2&=\frac{1}{2\mu}\int_{\Omega}|\bv_2(x)|^2|\bv(x)|^{\frac{4}{r-1}}|\bv(x)|^{\frac{2(r-3)}{r-1}}\d x\nonumber\\&\leq\frac{1}{2\mu}\left(\int_{\Omega}|\bv_2(x)|^{r-1}|\bv(x)|^2\d x\right)^{\frac{2}{r-1}}\left(\int_{\Omega}|\bv(x)|^2\d x\right)^{\frac{2(r-3)}{r-1}}\nonumber\\&\leq\frac{\beta}{4}\||\bv_2|^{\frac{r-1}{2}}\bv\|_{\H}^2+\left(\frac{1}{2\mu}\right)^{\frac{r-1}{r-3}}\left(\frac{r-3}{r-1}\right)\left(\frac{8}{\beta(r-1)}\right)^{\frac{2}{r-3}}\|\bv\|_{\H}^2.
\end{align*}
Therefore, we have the estimate 
\begin{align}\label{eqn-difference-3}
	-\langle\B(\bv,\bv_2),\bv\rangle&\leq  \frac{\mu}{2}\|\bv\|_{\V}^2+\frac{\beta}{4}\||\bv_2|^{\frac{r-1}{2}}\bv\|_{\H}^2+\left(\frac{1}{2\mu}\right)^{\frac{r-1}{r-3}}\left(\frac{r-3}{r-1}\right)\left(\frac{8}{\beta(r-1)}\right)^{\frac{2}{r-3}}\|\bv\|_{\H}^2.
\end{align}
Using \eqref{eqn-difference-0}, \eqref{eqn-difference-2}, \eqref{eqn-difference-3} in \eqref{a16}, we deduce 
\begin{align}\label{eqn-difference-4}
	&\frac{1}{2}\frac{\d}{\d t}\|\bv(t)\|_{\H}^2+\frac{\mu}{2}\|\bv(t)\|_{\V}^2+\alpha\|\bv(t)\|_{\H}^2
	\nonumber\\&\leq \left\{\left(\frac{1}{2\mu}\right)^{\frac{r-1}{r-3}}\left(\frac{r-3}{r-1}\right)\left(\frac{8}{\beta(r-1)}\right)^{\frac{2}{r-3}}\right.\nonumber\\&\qquad\left.+ \left(|\gamma|q2^{q-2}\right)^{\frac{r-1}{r-q}}\left(\frac{r-q}{r-1}\right)\left(\frac{2(q-1)}{\beta(r-1)}\right)^{\frac{q-1}{r-1}}\left[1+2^{\frac{q-1}{r-1}}\right]\right\}\|\bv(t)\|_{\H}^2,
\end{align}
for a.e. $t\in[0,T]$. From the above expression, it follows immediately, by the Poincar\'e inequality, that
\begin{align}\label{eqn-diff-5}
	\frac{\d}{\d t}\|\bv(t)\|_{\H}^2+\left(\mu\lambda_1+2\alpha-\zeta\right) \|\bv(t)\|_{\H}^2\leq 0
\end{align}
where $\zeta= 2\left\{\left(\frac{1}{2\mu}\right)^{\frac{r-1}{r-3}}\left(\frac{r-3}{r-1}\right)\left(\frac{8}{\beta(r-1)}\right)^{\frac{2}{r-3}}+ \left(|\gamma|q2^{q-2}\right)^{\frac{r-1}{r-q}}\left(\frac{r-q}{r-1}\right)\left(\frac{2(q-1)}{\beta(r-1)}\right)^{\frac{q-1}{r-1}}\left[1+2^{\frac{q-1}{r-1}}\right]\right\}$.
	
		For $\mu\lambda_1+2\alpha>\zeta$, from \eqref{eqn-diff-5}, we arrive at 
		\begin{align}\label{a19}
			\|\bv(t)\|_{\H}^2\leq\|\bv(0)\|_{\H}^2e^{-Lt}, \ \text{ where }\ L=\left(\mu\lambda_1+2\alpha-\zeta\right)>0.
		\end{align}
	for all $t\in[0,T]$.	Taking $t=T$ in \eqref{a19} and using the fact that $\bv(T)=\bv(0)$, we obtain $\|\bv(0)\|_{\H}=0$. Using this fact in \eqref{a19}, we obtain $\bv_1(t,x)=\bv_2(t,x)$, for $t\in[0,T]$, for a.e. $x\in\Omega$.  
		
		For $r>3$, one can estimate $\langle\B(\bv,\bv),\bv_2\rangle$ in the following way also: 
		\begin{align*}
			\langle\B(\bv,\bv),\bv_2\rangle&\leq\|\bv\|_{\V}\||\bv_2|\bv\|_{\H}\leq\frac{1}{\beta}\|\bv\|_{\V}^2+\frac{\beta}{4}\|\bv_2\bv\|_{\H}^2\nonumber\\&=\frac{1}{\beta}\|\bv\|_{\V}^2+\frac{\beta}{4}\int_{\Omega}|\bv(x)|^2|\left(|\bv_2(x)|^{r-1}+1\right)\frac{|\bv_2(x)|^2}{|\bv_2(x)|^{r-1}+1}\d x\nonumber\\&\leq \frac{1}{\beta}\|\bv\|_{\V}^2+\frac{\beta}{4}\int_{\Omega}|\bv(x)|^2|\bv_2(x)|^{r-1}\d x+ \frac{\beta}{4}\int_{\Omega}|\bv(x)|^2\d x, 
		\end{align*}
		where we have used the fact that $\left\|\frac{|\v|^2}{|\v|^{r-1}+1}\right\|_{\widetilde{\L}^{\infty}}<1$, for $r\geq 3$. Using this estimate instead of \eqref{eqn-difference-3}, for $\beta\mu>1$, we derive from \eqref{eqn-difference-4} for all $t\in[0,T]$ that 
			\begin{align*}
			\|\bv(t)\|_{\H}^2\leq\|\bv(0)\|_{\H}^2e^{-L_1t}, \ \mbox{ where $L_1=2\left(\left(\mu-\frac{1}{\beta}\right)\lambda_1+\alpha-\eta\right)>0,$ 	}
		\end{align*}
	 and $\eta= \left\{\frac{\beta}{4}+ \left(|\gamma|q2^{q-2}\right)^{\frac{r-1}{r-q}}\left(\frac{r-q}{r-1}\right)\left(\frac{2(q-1)}{\beta(r-1)}\right)^{\frac{q-1}{r-1}}\left[1+2^{\frac{q-1}{r-1}}\right]\right\}$. The uniqueness follows in this case also by taking $\left(\mu-\frac{1}{\beta}\right)\lambda_1+\alpha>\eta$ and $\beta\mu>1$.   
		
		For $r=3$ and $\beta\mu>1$, using \eqref{2.23} and \eqref{eqn-difference-2}, we have 
		\begin{align*}
			&\frac{1}{2}\frac{\d}{\d t}\|\bv(t)\|_{\H}^2+\mu\|\bv(t)\|_{\V}^2+\alpha\|\bv(t)\|_{\H}^2+\frac{\beta}{2}\||\bv_1(t)|\bv(t)\|_{\H}^2+\frac{\beta}{2}\||\bv_2(t)|\bv(t)\|_{\H}^2\nonumber\\&\leq-\gamma\langle \widetilde{\mathcal{C}}(\bv_1(t))-\widetilde{\mathcal{C}}(\bv_2(t)),\bv(t)\rangle  +\langle\B(\bv(t),\bv(t)),\bv_2(t)\rangle\nonumber\\&\leq \frac{\beta}{2}\||\bv_1(t)|\bv(t)\|_{\H}^2+\frac{\beta}{4}\||\bv_2(t)|\bv(t)\|_{\H}^2+\kappa\|\bv(t)\|_{\H}^2+ \|\bv(t)\|_{\V}\||\bv_2(t)|\bv(t)\|_{\H}\nonumber\\&\leq \frac{\beta}{2}\||\bv_1(t)|\bv(t)\|_{\H}^2+\frac{\beta}{2}\||\bv_1(t)|\bv(t)\|_{\H}^2+\kappa\|\bv(t)\|_{\H}^2+\frac{1}{\beta}\|\bv(t)\|_{\V}^2,
		\end{align*}
		for a.e. $t\in[0,T]$, where $\kappa=\left(|\gamma|q2^{q-2}\right)^{\frac{2}{3-q}}\left(\frac{3-q}{2}\right)\left(\frac{q-1}{\beta}\right)^{\frac{q-1}{2}}\left[1+2^{\frac{q-1}{2}}\right]$. Thus, for $\beta\mu>1$ and $\alpha+\left(\mu-\frac{1}{\beta}\right)\lambda_1>\kappa$, we have 
		\begin{align*}
			\|\bv(t)\|_{\H}^2\leq\|\bv(0)\|_{\H}^2e^{-2\wi L t}, \ \text{ for all } \ t\in[0,T], \ \text{ where }\ \wi L=\alpha+\left(\mu-\frac{1}{\beta}\right)\lambda_1-\kappa,
		\end{align*}
		and the uniqueness of weak solutions follows. 
	\end{proof} 
	
\begin{remark}
	For $\gamma=0$ in \eqref{222}, that is, for CBF equations with $r>3$, uniqueness of weak solutions is ensured under the condition $\mu\lambda_1+2\alpha>2\left(\frac{1}{2\mu}\right)^{\frac{r-1}{r-3}}\left(\frac{r-3}{r-1}\right)\left(\frac{4}{\beta(r-1)}\right)^{\frac{2}{r-3}}$, and for $r=3$, uniqueness holds provided that $2\beta\mu\geq 1$. 
\end{remark}

{
\begin{appendix}
		\renewcommand{\thesection}{\Alph{section}}
		\numberwithin{equation}{section}

		\section{Uniqueness for $d=2,3$ and $r\in[1,3]$}\label{App} 
		
		For $d=2,3$ and $r\in[1,3]$ (including the case $r=3$, which is not covered in Theorem \ref{thm2.11})), the difficulty of the uniqueness problem is comparable to that of the 2D and 3D time-periodic NSE. Under a suitable smallness assumption on the data $\f\in\mathrm{L}^2(0,T;\H)$, by  employing a fixed point argument in appropriate function spaces, we obtain the uniqueness of strong solutions. For $\f\in\mathrm{L}^2(0,T;\H)$, we say that $\bv\in\mathrm{C}([0,T];\V)\cap\mathrm{L}^2(0,T;\D(\A)),$ with $\partial_t\bv\in\mathrm{L}^2(0,T;\H)$ is a \emph{time-periodic strong solution}  to the system (\ref{222}) if $\bv(0)=\bv(T)$  and  (\ref{222}) is satisfied  in $\H$ for a.e. $t\in(0,T)$.

		
			\begin{theorem}\label{thm-app-2}
			For $d=2,3$ and $r\in[1,3]$, let  $\f\in\mathrm{L}^2(0,T;\H)$  be given. If $\|\f\|_{\mathrm{L}^2(0,T;\H)}$ is sufficiently small, then  there exists a unique  time-periodic strong solution $\bv\in \mathrm{C}([0,T];\V)\cap \mathrm{L}^2(0,T;\mathrm{D}(\A)) \cap \mathrm{H}^1(0,T;\mathbb{H})$ for the problem \eqref{222}. Moreover, the solution satisfies the following estimate:
			\begin{align}\label{ener-storng}
				\|\bv\|_{\mathrm{L}^{\infty}(0,T;\V)}+\|\bv\|_{\mathrm{L}^2(0,T;\D(\A))}+\|\partial_t\bv\|_{\mathrm{L}^2(0,T;\H)}\leq C\|\f\|_{\mathrm{L}^2(0,T;\H)}. 
			\end{align}
		\end{theorem}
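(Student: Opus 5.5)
Following the appendix's description, I plan to prove Theorem \ref{thm-app-2} with a Banach fixed point argument for the linear periodic Stokes problem with damping. I will work in
\[
\mathcal{X}:=\mathrm{C}([0,T];\V)\cap\mathrm{L}^2(0,T;\D(\A))\cap\mathrm{H}^1(0,T;\H),
\]
normed by the left-hand side of \eqref{ener-storng}, and restricted to time-periodic functions, $\u(0)=\u(T)$.

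\emph{Step 1: the linear periodic problem.} For $\boldsymbol{h}\in\mathrm{L}^2(0,T;\H)$, I solve $\partial_t\bv+\mu\A\bv+\alpha\bv=\boldsymbol{h}$ with $\bv(0)=\bv(T)$. The method is to expand in the Stokes eigenbasis. Each mode solves a scalar ODE $g_k'+(\mu\lambda_k+\alpha)g_k=h_k$, and it has the unique periodic solution
\[
g_k(t)=(1-e^{-(\mu\lambda_k+\alpha)T})^{-1}\int_{t-T}^t e^{-(\mu\lambda_k+\alpha)(t-s)}h_k(s)\d s,
\]
where $h_k$ is extended periodically. For the maximal regularity estimate I test with $\A\bv$. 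Periodicity kills the boundary term $\|\bv(T)\|_{\V}^2-\|\bv(0)\|_{\V}^2$. This gives
\[
\mu\|\bv\|_{\mathrm{L}^2(0,T;\D(\A))}\le\mu^{-1}\|\boldsymbol{h}\|_{\mathrm{L}^2(0,T;\H)}.
\]
Next I bound $\sup_t\|\bv\|_{\V}$ in the same way as \eqref{b3}: multiply by $t$, or use the variation of constants with factor $(1-e^{-\mu\lambda_1T})^{-1}$. Finally, the equation itself controls $\|\partial_t\bv\|_{\mathrm{L}^2(0,T;\H)}$. The result is a bounded solution operator $\mathcal{S}:\mathrm{L}^2(0,T;\H)\to\mathcal{X}$ with $\|\mathcal{S}\boldsymbol{h}\|_{\mathcal{X}}\le C_0\|\boldsymbol{h}\|_{\mathrm{L}^2(0,T;\H)}$.

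\emph{Step 2: the nonlinear map.} I define $\Phi(\u)=\mathcal{S}\big(\f-\B(\u)-\beta\mathcal{C}(\u)-\gamma\widetilde{\mathcal{C}}(\u)\big)$ on the ball $\{\|\u\|_{\mathcal{X}}\le\rho\}$. I need Lipschitz bounds in $\mathrm{L}^2(0,T;\H)$ for the nonlinearities.

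For $\B$ with $d\le3$, I use Hölder and Agmon/Gagliardo--Nirenberg:
\[
\|\B(\u)-\B(\v)\|_{\H}\le C\big(\|\u\|_{\V}^{1/2}\|\u\|_{\D(\A)}^{1/2}+\cdots\big)\|\u-\v\|_{\V}.
\]
Integrating in time gives a factor $CT^{1/4}(\|\u\|_{\mathcal{X}}+\|\v\|_{\mathcal{X}})\|\u-\v\|_{\mathcal{X}}$.

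For $\mathcal{C}$ with $r\in[1,3]$, I use the pointwise bound $||\u|^{r-1}\u-|\v|^{r-1}\v|\le r(|\u|+|\v|)^{r-1}|\u-\v|$. Then Hölder with $\L^6$ bounds and $\V\hookrightarrow\L^6$ give
\[
\|\mathcal{C}(\u)-\mathcal{C}(\v)\|_{\H}\le C(\|\u\|_{\V}+\|\v\|_{\V})^{r-1}\|\u-\v\|_{\V}.
\]
This requires $2(r-1)\cdot\frac{3}{2}\le6$, i.e.\ $r\le3$, which is exactly why the theorem is restricted to $r\in[1,3]$. The same bound holds for $\widetilde{\mathcal{C}}$, since $q<r$.

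\emph{Main obstacle.} The genuine difficulty is the linear terms of the pumping/damping part when $r=1$ or $q=1$. The hardest case is $q=1$ with $\gamma<0$: then $\gamma\widetilde{\mathcal{C}}(\u)=\gamma\u$ is not small near zero, so a smallness argument cannot control it. I will handle this by moving all terms of degree one, namely $\beta\u$ when $r=1$ and $\gamma\u$ when $q=1$, into the linear operator. The coefficient becomes $\alpha'=\alpha+\beta\mathbf{1}_{r=1}+\gamma\mathbf{1}_{q=1}$. Step 1 only requires $\mu\lambda_1+\alpha'>0$, and I will assume this is part of the smallness/structural hypothesis. After this, every remaining nonlinearity has degree at least $\min(2,q)>1$. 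Hence on the ball the Lipschitz constant is $C(\rho+\rho^{q-1}+\rho^{r-1})<\tfrac12$ for small $\rho$, and $\Phi$ maps the ball to itself once $\|\f\|_{\mathrm{L}^2(0,T;\H)}\le\rho/(2C_0)$.

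Banach's theorem then yields a unique fixed point in the ball, which is the periodic strong solution. The estimate \eqref{ener-storng} follows from $\|\bv\|_{\mathcal{X}}\le C_0\|\f\|+\tfrac12\|\bv\|_{\mathcal{X}}$.
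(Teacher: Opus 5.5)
Your route is the paper's. You solve the periodic problem for $\mu\A+\alpha\mathrm{I}$ mode by mode with the periodic Green kernel, which gives a bounded $\mathcal{S}:\mathrm{L}^2(0,T;\H)\to\mathcal{X}_{\mathrm{per}}$. You then apply Banach's fixed point theorem to $\Phi(\bv)=\mathcal{S}(\f-\B(\bv)-\beta\mathcal{C}(\bv)-\gamma\widetilde{\mathcal{C}}(\bv))$ on a small ball, using Agmon's inequality for $\B$ and $\V\hookrightarrow\L^6$ for $\mathcal{C},\widetilde{\mathcal{C}}$. The only substantive difference is how you handle $q=1$, and there your version is the correct one. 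When $q=1$, $\gamma\widetilde{\mathcal{C}}(\bv)=\gamma\bv$ is linear. So the paper's contraction factor $C(R+R^{r-1}+R^{q-1})$ and its self-map condition $C(R^2+R^r+R^q)\le R/2$ do not improve as $R\to0$. The paper's argument therefore silently needs $|\gamma|$ small in that case, and absorbing $\gamma\bv$ into the linear operator is the right repair.

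Your extra hypothesis is not an artifact of the method; the statement as written fails without some such condition. Suppose $q=1$ and $\mu\lambda_k+\alpha+\gamma=0$ for some $k$, and take $\f=\varepsilon\boldsymbol{w}_k$. Pair \eqref{222} with $\boldsymbol{w}_k$ and integrate over a period. The time derivative drops out by periodicity, and the linear terms vanish pointwise because their coefficient is $\mu\lambda_k+\alpha+\gamma=0$. This leaves $\varepsilon T=-\int_0^T(\B(\bv)+\beta\mathcal{C}(\bv),\boldsymbol{w}_k)\,\d t$. The right side is $O(\|\bv\|_{\mathcal{X}}^2+\|\bv\|_{\mathcal{X}}^r)$, so with $r>1$ it contradicts \eqref{ener-storng} as $\varepsilon\to0$.

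Your condition $\mu\lambda_1+\alpha+\gamma>0$ is sufficient and is automatic when $\gamma\ge0$. It can be relaxed to $\mu\lambda_k+\alpha+\gamma\neq0$ for all $k$. Your modal formula works for any nonzero $\omega_k$, and since $\lambda_k\to\infty$ only finitely many modes can be negative, each contributing a finite constant.

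One caveat applies to both your argument and the paper's: Banach's theorem gives uniqueness only within the small ball, not among all periodic strong solutions.
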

		\begin{proof}
		\textbf{Step 1: The linear problem.} We first consider the following linear time-periodic problem in $\mathbb{H}$: 
		\begin{equation}\label{linear-periodic}
			\left\{
			\begin{aligned}
				\partial_t\u(t)+ (\mu \A + \alpha \mathrm{I})\u (t)&= \boldsymbol{h}(t), \ \text{ for a.e.}\ t\in(0,T),\\
				\u(0) &= \u(T),
			\end{aligned}
			\right.
		\end{equation}
		where $\A$ is the Stokes operator and $\boldsymbol{h}\in\mathrm{L}^2(0,T;\H)$. Since $(\lambda_k,\w_k)$, $k\in\mathbb{N}$ is an eigenpair for $\A$, then for the operator $	\mathcal{L} := \mu \A + \alpha \mathrm{I}$, we have  $\mathcal{L} \w_k = \omega_k \w_k,$ where $\omega_k := \mu \lambda_k + \alpha > 0.$
	Using the expansions
		\begin{align}\label{sol-1}
		\u(t) = \sum_{k=1}^\infty u_k(t)\w_k,
		\  \
		\boldsymbol{h}(t) = \sum_{k=1}^\infty h_k(t)\w_k,
		\end{align}
		where $u_k=(\u,\w_k)$ and $h_k=(\boldsymbol{h},\w_k)$ in \eqref{linear-periodic} yields, for each $k$,
		\begin{equation}\label{scalar-ode}
			\dot u_k(t) + \omega_k u_k(t) = h_k(t),
			\ 
			u_k(0) = u_k(T).
		\end{equation}
		The unique solution of \eqref{scalar-ode} is given by 
		\begin{align}\label{linear-solution}
		u_k(t) = e^{-\omega_k t}\left(u_k(0)+	\int_0^t e^{\omega_k s} h_k(s)\d s\right),
		\end{align}
	for all $t\in[0,T]$.	Let us set  $t=T$ and use the periodicity condition $u_k(T)=u_k(0)$ to obtain
	\begin{align*}
		u_k(0)=e^{-\omega_k T}\left(u_k(0)+\int_0^T e^{\omega_k s} h_k(s)\d s\right).
	\end{align*}
		Solving for $u_k(0)$, we find 
	\begin{align*}
		u_k(0)=\frac{e^{-\omega_k T}}{1 - e^{-\omega_k T}}\int_0^T e^{\omega_k s} h_k(s)\d s.
	\end{align*}
		Substituting this expression in \eqref{linear-solution} yields 
	\begin{align}\label{sol-0}
	u_k(t)=\frac{e^{-\omega_k t}}{1-e^{-\omega_k T}}	\left[	\int_0^t e^{\omega_k s} h_k(s)\d s+	e^{-\omega_k T}	\int_t^T e^{\omega_k s} h_k(s)\d s\right].
	\end{align}
		Equivalently, we have 
		\begin{align}\label{sol-2}
		u_k(t)=\int_0^T K_k(t,s) h_k(s)\d s,
		\end{align}
		where the scalar periodic Green kernel is
		\begin{align*}
		K_k(t,s)
		=\left\{\begin{array}{ll}\frac{e^{-\omega_k (t-s)}}{1 - e^{-\omega_k T}},&
			0\leq s \le t,\\
		\frac{e^{-\omega_k (t-s+T)}}{1 - e^{-\omega_k T}},&t< s\leq T.
	\end{array}\right. 
		\end{align*}

		Since $\omega_k>0$, we know that $0<e^{-\omega_k T}<1,$ which implies $1-e^{-\omega_k T}\ge 1-e^{-\omega_1 T}=c>0.$ Therefore, we get $\frac{1}{1-e^{-\omega_k T}}	\le C,$ uniformly in $k$.
		Using the Cauchy-Schwarz inequality, we deduce 
	\begin{align*}
		\left|\int_0^t e^{\omega_k s} h_k(s)\d s \right| \leq \left( \int_0^T e^{2\omega_k s}ds \right)^{1/2}	\|h_k\|_{\mathrm{L}^2(0,T)} \le \frac{e^{\omega_k T}}{\sqrt{2\omega_k}} \|h_k\|_{\mathrm{L}^2(0,T)}.
	\end{align*}
		A similar estimate holds for the second  term in the right hand side of \eqref{sol-0}.
		Therefore, we infer 
	\begin{align*}
		u_k^2(t) \le \frac{C}{\omega_k}	\|h_k\|_{\mathrm{L}^2(0,T)}^2.
	\end{align*}
		Since $\omega_k=\mu\lambda_k+\alpha\ge \mu\lambda_k,$ we have	$\frac{\lambda_k}{\omega_k} \le \frac{1}{\mu}.$
Using this fact, we deduce 
		\begin{align*}
		\lambda_k u_k^2(t) \le C\|h_k\|_{\mathrm{L}^2(0,T)}^2.
	\end{align*}
	Note that 
	$\|\bv(t)\|_{\V}^2=\sum_{k=1}^\infty\lambda_k u_k^2(t).$	Summing over $k$ in the above expression, we find 
	\begin{align*}
	\sup_{t\in[0,T]}	\|\u(t)\|_{\V}^2 \le C	\sum_{k=1}^\infty \|h_k\|_{\mathrm{L}^2(0,T)}^2 = C	\|\boldsymbol{h}\|_{\mathrm{L}^2(0,T;\mathbb{H})}^2.
	\end{align*}

	Multiplying \eqref{scalar-ode} by $\omega_k u_k$, we get  
	\begin{align}\label{ener-est-2}
		\frac{\omega_k}{2}\frac{\d}{\d t}(u_k^2(t)) + \omega_k^2 u_k^2(t) = \omega_k u_k (t)h_k(t),
	\end{align}
	for a.e. $t\in(0,T)$.
		Integrating over time from $0$ to $T$ in \eqref{ener-est-2}, we find 
	\begin{align*}
		\frac{\omega_k}{2}\big(u_k^2(T)-u_k^2(0)\big)+\omega_k^2 \int_0^T u_k^2(t) \d t= \omega_k \int_0^T u_k(t) h_k(t) \d t.
	\end{align*}
		Using periodicity $u_k(T)=u_k(0)$, the boundary term vanishes and by applying Young's inequality, we obtain 
		\begin{align}\label{energy-1}
		\omega_k^2 \int_0^T u_k^2(t) \d t =	\omega_k \int_0^T u_k(t) h_k(t) \d t\leq \frac{\omega_k^2}{2} \int_0^T u_k^2(t) \d t+\frac{1}{2} \int_0^T h_k^2(t) \d t.
		\end{align}
	Since	$\omega_k=\mu\lambda_k+\alpha	\ge \mu\lambda_k,$ we have $	\lambda_k^2\le	\frac{1}{\mu^2}\omega_k^2.$ Therefore, from \eqref{energy-1}, we infer 
	\begin{align*}
		\lambda_k^2 \int_0^T u_k^2(t) \d t	\le	\frac{1}{\mu^2}	\omega_k^2 \int_0^T u_k^2(t) \d t	\le \frac{1}{\mu^2} 	\int_0^T h_k^2(t) \d t.
	\end{align*}
	Note  that  $	\|\A\u\|_{\mathrm{L}^2(0,T;\mathbb{H})}^2= \sum_{k=1}^\infty \lambda_k^2 \int_0^T u_k^2(t) \d t.$	Summing over $k$ in the above expression, we arrive at 
	\begin{align*}
	\|\A\u\|_{\mathrm{L}^2(0,T;\mathbb{H})}^2 \le \frac{1}{\mu^2} \|\boldsymbol{h}\|_{\mathrm{L}^2(0,T;\mathbb{H})}^2,
		\end{align*}
		so that $\u\in \mathrm{L}^2(0,T;\mathrm{D}(\A)).$ From \eqref{linear-periodic}, we have 
	$	\partial_t\u= \boldsymbol{h} - \mathcal{L}\u.$ Since $\boldsymbol{h} \in \mathrm{L}^2(0,T;\mathbb{H})$ and $\mathcal{L}\u \in \mathrm{L}^2(0,T;\mathbb{H})$, we obtain $\partial_t\u\in\mathrm{L}^2(0,T;\mathbb{H})$
		and
		\begin{align*}
		\|\u\|_{\mathrm{L}^2(0,T;\mathrm{D}(\A))} +	\|\partial_t\u\|_{\mathrm{L}^2(0,T;\mathbb{H})} \le C\|\boldsymbol{h}\|_{\mathrm{L}^2(0,T;\mathbb{H})}.
		\end{align*}
		Therefore the linear problem \eqref{linear-periodic} admits a unique time-periodic strong solution (see \eqref{sol-1} and \eqref{sol-2})
	\begin{align*}
	\u \in \mathcal{X}:=\mathrm{C}([0,T];\V)\cap \mathrm{L}^2(0,T;\mathrm{D}(\A)) \cap \mathrm{H}^1(0,T;\mathbb{H}),
\end{align*}
		with estimate
	\begin{align*}
		\|\u\|_{\mathcal{X}} \le C \|\boldsymbol{h}\|_{\mathrm{L}^2(0,T;\mathbb{H})}.
	\end{align*}
	We define $\mathcal{X}_{\mathrm{per}}:=\left\{\u\in\mathcal{X}:\u(0)=\u(T)\right\}$. Let us denote the solution operator $\mathcal{S}:\mathrm{L}^2(0,T;\H)\to\mathcal{X}_{\mathrm{per}}$ by
	\begin{align*}
		\mathcal{S}(\boldsymbol{h})=\u.
	\end{align*}

		\textbf{Step 2: The nonlinear problem.} 
	Let us now consider the nonlinear problem \eqref{222} for $d=3$ and $r\in[1,3]$. The case for $d=3$ and $r\in[1,3]$ is similar and we omit here. We rewrite \eqref{222}  for a.e. $t\in(0,T)$ in $\H$ as
		\begin{align*}
		\partial_t\bv(t)+(\mu\A+\mathrm{I})\bv(t)=\f(t)-\B(\bv(t))-\beta\mathcal{C}(\bv(t))-\gamma\widetilde{\mathcal{C}}(\bv(t)).
	\end{align*}
		We define the mapping
	\begin{align*}
		\Phi(\bv)=	\mathcal{S}\left(	\f-\B(\bv)-\beta\mathcal{C}(\bv)-\gamma\widetilde{\mathcal{C}}(\bv)	\right).
	\end{align*}
		Note that the time-periodic strong solutions of \eqref{222} correspond to  fixed points $\bv=\Phi(\bv).$
We estimate $\|\B(\bv)\|_{\mathrm{L}^2(0,T;\mathbb{H})}$ using H\"older's and Agmon's inequalities as 
	\begin{align*}
	\|\B(\bv)\|_{\mathrm{L}^2(0,T;\mathbb{H})}^2&\leq\int_0^T\|(\bv(t)\cdot\nabla)\bv(t)\|_{\H}^2\d t\leq \int_0^T\|\bv(t)\|_{\wi\L^{\infty}}^2\|\nabla\bv(t)\|_{\H}^2\d t \nonumber\\&\leq C\sqrt{T}\sup_{t\in[0,T]}\|\bv(t)\|_{\V}^3\left(\int_0^T\|\A\bv(t)\|_{\H}^2\d t\right)^{1/2},
	\end{align*}
	so that 
	\begin{align*}
		\|\B(\bv)\|_{\mathrm{L}^2(0,T;\mathbb{H})}\leq C\left(\|\bv\|_{\mathrm{L}^{\infty}(0,T;\V)}^2+\|\bv\|_{\mathrm{L}^2(0,T;\D(\A))}^2\right)\leq C\|\bv\|_{\mathcal{X}}^2. 
	\end{align*}
	An application of Sobolev's inequality yields $\|\bv\|_{\wi\L^{2r}}\leq C\|\bv\|_{\V}$ for $r\in[1,3]$, so that 
	\begin{align*}
		\|\mathcal{C}(\bv)\|_{\mathrm{L}^2(0,T;\H)}\leq C\|\bv\|_{\mathrm{L}^{2r}(0,T;\wi\L^{2r})}^r\leq C\|\bv\|_{\mathrm{L}^{\infty}(0,T;\V)}^r\leq C\|\bv\|_{\mathcal{X}}^r. 
		\end{align*}
		Since $1\leq q<r$, a similar calculation yields 
			\begin{align*}
			\|\widetilde{\mathcal{C}}(\bv)\|_{\mathrm{L}^2(0,T;\H)}\leq  C\|\bv\|_{\mathcal{X}}^q. 
		\end{align*}
	Therefore, we have 
		\begin{align}\label{eqn-map}
		\|\Phi(\bv)\|_{\mathcal{X}}&=\left\|\mathcal{S}\left(\f-\B(\bv)-\beta\mathcal{C}(\bv)-\gamma\widetilde{\mathcal{C}}(\bv)	\right)\right\|_{\mathcal{X}} \leq \left\|		\f-\B(\bv)-\beta\mathcal{C}(\bv)-\gamma\widetilde{\mathcal{C}}(\bv)\right\|_{\mathcal{X}} \nonumber\\&\leq C\left( \|\f\|_{\mathrm{L}^2(0,T;\H)}+\|\bv\|_{\mathcal{X}}^2+\|\bv\|_{\mathcal{X}}^r+\|\bv\|_{\mathcal{X}}^q\right). 
	\end{align}
		Let $R>0$ and define the closed ball of radius $R$ as $$\mathcal{B}_R:=\left\{\bv\in\mathcal{X}_{\mathrm{per}}:\|\bv\|_{\mathcal{X}}\leq R\right\}.$$ 
		Then from \eqref{eqn-map}, we infer 
		\begin{align*}
		\|\Phi(\bv)\|_{\mathcal{X}} \le C\left(	\|\f\|_{\mathrm{L}^2(0,T;\H)}+ R^2 + R^r + R^q	\right).
	\end{align*}
		Let us choose  $R=2C\|\f\|_{\mathrm{L}^2(0,T;\H)}$. If  $\|\f\|_{\mathrm{L}^2(0,T;\H)}$ is sufficiently small so that $C(R^2+R^r+R^q) \le \frac{R}{2},$ then $\Phi(\mathcal{B}_R)\subset \mathcal{B}_R$.

	Let us now show that $\Phi$ is a contraction on $\mathcal{B}_R$.	We write
	\begin{align*}
		\Phi(\bv_1)-\Phi(\bv_2)= \mathcal{S}\left( - \B(\bv_1)+\B(\bv_2) - \beta(\mathcal{C}(\bv_1)-\mathcal{C}(\bv_2)) - \gamma(\widetilde{\mathcal{C}}(\bv_1)-\widetilde{\mathcal{C}}(\bv_2))\right),
	\end{align*}
		so that 
	\begin{align*}
		\|\Phi(\bv_1)-\Phi(\bv_2)\|_{\mathcal{X}}\leq C\|\mathcal{F}(\bv_1)-\mathcal{F}(\bv_2)\|_{\mathrm{L}^2(0,T;\H)},
	\end{align*}
	where 
	$\mathcal{F}(\bv)=\B(\bv)+\beta(\mathcal{C}(\bv)+\gamma\widetilde{\mathcal{C}}(\bv)$. We know that $\B(\bv_1)-\B(\bv_2)=\B(\bv_1,\bv_1-\bv_2)+\B(\bv_1-\bv_2,\bv_2)$ and by using H\"older's and Agmon's inequalities, we find 
	\begin{align*}
	&\int_0^T	\|\B(\bv_1(t))-\B(\bv_2(t))\|_{\H}^2\d t\nonumber\\&\leq 2\int_0^T\|\B(\bv_1(t),\bv_1(t)-\bv_2(t))\|_{\H}^2\d t+2\int_0^T\|\B(\bv_1(t)-\bv_2(t),\bv_2(t))\|_{\H}^2\d t \nonumber\\&\leq 2\int_0^T\left(\|\bv_1(t)\|_{\wi\L^{\infty}}^2+\|\bv_2(t)\|_{\wi\L^{\infty}}^2\right)\|\bv_1(t)-\bv_2(t)\|_{\V}^2\d t
	\nonumber\\&\leq C\sqrt{T}\left[\sup_{t\in[0,T]}\|\bv_1(t)\|_{\V}\left(\int_0^T\|\A\bv_1(t)\|_{\H}^2\d t\right)^{1/2}+\sup_{t\in[0,T]}\|\bv_2(t)\|_{\V}\left(\int_0^T\|\A\bv_2(t)\|_{\H}^2\d t\right)^{1/2}\right]\nonumber\\&\quad\times\sup_{t\in[0,T]}\|\bv_1(t)-\bv_2(t)\|_{\V}^2\nonumber\\&\leq CR^2\|\bv_1-\bv_2\|_{\mathcal{X}}^2,
	\end{align*}
	for all $\bv_1,\bv_2\in \mathcal{B}_R$. Let us now consider $\|\mathcal{C}(\bv_1)-\mathcal{C}(\bv_2)\|_{\mathrm{L}^2(0,T;\H)}$ and estimate it using Taylor's formula and Sobolev's inequality as 
	\begin{align*}
		&\int_0^T	\|\mathcal{C}(\bv_1(t))-\mathcal{C}(\bv_2(t))\|_{\H}^2\d t\nonumber\\&\leq C\int_0^T\left(\|\bv_1(t)\|_{\wi\L^{2r}}^{2(r-1)}+\|\bv_2(t)\|_{\wi\L^{2r}}^{2(r-1)}\right)\|\bv_1(t)-\bv_2(t)\|_{\wi\L^{2r}}^2\d t
		\nonumber\\&\leq C\sup_{t\in[0,T]}\left(\|\bv_1(t)\|_{\V}^{2(r-1)}+\|\bv_2(t)\|_{\V}^{2(r-1)}\right)\sup_{t\in[0,T]}\|\bv_1(t)-\bv_2(t)\|_{\V}^2\nonumber\\&\leq CR^{2(r-1)}\|\bv_1-\bv_2\|_{\mathcal{X}}^2,
	\end{align*}
	for $r\in[1,3]$ and for all $\bv_1,\bv_2\in \mathcal{B}_R$. A similar calculation yields 
	\begin{align*}
		&\int_0^T	\|\widetilde{\mathcal{C}}(\bv_1(t))-\widetilde{\mathcal{C}}(\bv_2(t))\|_{\H}^2\d t\leq CR^{2(q-1)}\|\bv_1-\bv_2\|_{\mathcal{X}}^2,
	\end{align*}
for $1\leq q<r\leq 3$ and for all $\bv_1,\bv_2\in \mathcal{B}_R$. 
		Combining all the above estimates, we obtain
	\begin{align*}
		\|\mathcal{F}(\bv_1)-\mathcal{F}(\bv_2)\|_{\mathrm{L}^2(0,T;\H)}\leq C\left(R+R^{r-1}+R^{q-1}\right)\|\bv_1-\bv_2\|_{\mathcal{X}}. 
	\end{align*}
		Therefore, we have 
	\begin{align*}
		\|\Phi(\bv_1)-\Phi(\bv_2)\|_{\mathcal{X}}\leq C\left(R+R^{r-1}+R^{q-1}\right)\|\bv_1-\bv_2\|_{\mathcal{X}}. 
	\end{align*}
		If $R$ is chosen sufficiently small so that $ C\big(R + R^{r-1} + R^{q-1}\big) < 1,	$ then $\Phi$ is a contraction on $\mathcal{B}_R$. By an application of the Banach fixed point theorem, we infer the existence of a  unique $\bv\in \mathcal{B}_R$ such that $\Phi(\bv)=\bv$. Therefore, $d=3$ and $r\in[1,3]$,  if $\|\f\|_{\mathrm{L}^2(0,T;\H)}$ is sufficiently small, then  there exists a unique  time-periodic strong solution $\bv\in \mathcal{X}$ for the problem \eqref{222}. Since $\bv$ lies in $\mathcal{B}_R$, we have 
		\begin{align*}
			\|\bv\|_{\mathcal{X}}\leq R=2C\|\f\|_{\mathrm{L}^2(0,T;\H)},
		\end{align*}
		which completes the proof of the estimate \eqref{ener-storng}. 
		\end{proof} 
		
		\begin{remark}
		1. Owing to the Sobolev and Gagliardo-Nirenberg inequalities, the fixed point argument used in Theorem \ref{thm-app-2} remains valid for $r\in[1,5]$ in three dimensions and for all $r\in[1,\infty)$ in two dimensions.
		
		2. For the case $d=2$ and $r\in[1,3]$, since  $\f\in\mathrm{L}^2(0,T;\H)$, one may proceed by considering the Galerkin approximated system \eqref{a1} and derive uniform bounds of the form
		$$	\|\boldsymbol{v}_m(t)\|_{\V}^2 + \int_0^t \|\A \boldsymbol{v}_m(s)\|_{\H}^2 \d s + \int_0^t \|\boldsymbol{v}_m'(s)\|_{\H}^2 \d s\leq C\|\f\|_{\mathrm{L}^2(0,T;\H)},$$ for all $t\in[0,T]$. By the Banach-Alaoglu theorem, we may extract subsequences that converge weakly in the corresponding spaces. Moreover, by the Aubin-Lions-Simon  compactness theorem, we obtain strong convergence in 
		$\mathrm{L}^2(0,T;\V).$ Passing to the limit in the Galerkin system \eqref{a1}, we conclude the existence of a strong solution $\bv\in\mathrm{C}([0,T];\V)\cap\mathrm{L}^2(0,T;\D(\A)),$ with $\partial_t\bv\in\mathrm{L}^2(0,T;\H)$  to the problem \eqref{eqn-model}.
	If $\|\f\|_{\mathrm{L}^2(0,T;\H)}$	is chosen sufficiently small, the bilinear term can be controlled, and uniqueness of weak solutions follows similarly as in Step (3) in the proof of Theorem \ref{thm2.11} (see \cite[Section 5]{HMo} for the 2D NSE).
		\end{remark}
		
	\end{appendix}}

		\medskip\noindent
		{\bf Acknowledgements:} Support for M. T. Mohan's research received from the National Board of Higher Mathematics (NBHM), Department of Atomic Energy, Government of India (Project No. 02011/13/2025/NBHM(R.P)/R\&D II/1137). We sincerely thank the reviewers for their careful reading and constructive suggestions, which have significantly improved the quality and clarity of our paper.

			\medskip\noindent	{\bf  Declarations:} 
		
		\noindent 	{\bf  Ethical Approval:}   Not applicable 
		
		\noindent  {\bf   Competing interests: } The authors declare no competing interests. 
		
		\noindent 	{\bf   Authors' contributions: } All authors have contributed equally. 
		

		\noindent 	{\bf   Availability of data and materials: } Not applicable.


\begin{thebibliography}{99}
			
			
			
			
			
			\bibitem{SNA1} S. N. Antontsev and H. B. de Oliveira, Navier-Stokes equations with absorption under slip boundary conditions: existence, uniqueness and extinction in time, Kyoto Conference on the Navier-Stokes Equations and their Applications, Kyoto, \emph{Res. Inst. Math. Sci.}, (2007), 21--41.
			
			\bibitem{SNA}	S.~N. Antontsev and H.~B. de~Oliveira, The Navier-Stokes problem modified by an absorption term, \emph{Appl. Anal.} {\bf 89} (2010), no.~12, 1805--1825
			
			
			
			
			
%
%
%
%
%
%
%
%
%
%
%
%
%
			
			
%
%
%
%
%


 \bibitem{ZCQJ} X. Cai and Q. Jiu, Weak and strong solutions for the incompressible Navier-Stokes equations with damping, \emph{J. Math. Anal. Appl.} {\bf 343} (2008), no.~2, 799--809.

\bibitem{VVC+MIV-02}  V.~V. Chepyzhov and M.~I. Vishik, \emph{Attractors for Equations of Mathematical Physics}, American Mathematical Society Colloquium Publications, 49, Amer. Math. Soc., Providence, RI, 2002.

	\bibitem{PCAM} P. Cherrier and A.~J. Milani, \emph{Linear and Quasi-linear Evolution Equations in Hilbert Spaces}, Graduate Studies in Mathematics, 135, Amer. Math. Soc., Providence, RI, 2012.
			
			
			
			
			
			
			
			
%
%
%
	\bibitem{LCE} L.~C. Evans, \emph{Partial Differential Equations}, second edition, 
	Graduate Studies in Mathematics, 19, Amer. Math. Soc., Providence, RI, 2010.
%
%
%
			
			
			
			
				\bibitem{FKS} R. Farwig, H. Kozono and H. Sohr, An $L^q$-approach to Stokes and Navier-Stokes equations in general domains, \emph{Acta Math.} {\bf 195} (2005), 21--53.
			
			
			
			
			
			\bibitem{CLF} 	C.~L. Fefferman, K.~W. Hajduk and J.~C. Robinson, Simultaneous approximation in Lebesgue and Sobolev norms via eigenspaces, \emph{Proc. Lond. Math. Soc. (3)} {\bf 125} (2022), no.~4, 759--777.
			
			
			
			\bibitem{DFHM} D. Fujiwara and H. Morimoto, An $L\sb{r}$-theorem of the Helmholtz decomposition of vector fields, \emph{J. Fac. Sci. Univ. Tokyo Sect. IA Math.} {\bf 24} (1977), no.~3, 685--700.
			
			
			
			
			
				\bibitem{galdi-1} G.~P. Galdi, An introduction to the Navier-Stokes initial-boundary value problem, in {\it Fundamental directions in mathematical fluid mechanics}, 1--70, Adv. Math. Fluid Mech., Birkh\"auser, Basel, 2000. 
			
			
			\bibitem{galdi} G.~P. Galdi, \emph{An Introduction to the Mathematical Theory of the Navier-Stokes Equations}, second edition,  Springer Monographs in Mathematics, Springer, New York, 2011.
			
			\bibitem{GPAL-06}  G.~P. Galdi and A.~L. Silvestre, Existence of time-periodic solutions to the Navier-Stokes equations around a moving body, \emph{Pacific J. Math.} {\bf 223} (2006), no.~2, 251--267.
			
			\bibitem{GPG-13}  G.~P. Galdi, Existence and uniqueness of time-periodic solutions to the Navier-Stokes equations in the whole plane, \emph{Discrete Contin. Dyn. Syst. Ser. S}  {\bf 6} (2013), no.~5, 1237--1257.
			
			\bibitem{GPG-22} G.~P. Galdi, Existence, uniqueness and asymptotic behavior of regular time-periodic solutions to the Navier-Stokes equations around a moving body: rotational case, \emph{Indiana Univ. Math. J.}, {\bf 71} (2022), no.~6, 2259--2281.
			
			
			
			\bibitem{MTM7} S. Gautam and M.~T. Mohan, On the convective Brinkman-Forchheimer equations, \emph{Dyn. Partial Differ. Equ.} {\bf 22} (2025), no.~3, 191--233.
			
			
		\bibitem{SGKKMTM} 	S. Gautam, K. Kinra and M.~T. Mohan, Feedback stabilization of convective Brinkman-Forchheimer extended Darcy equations, \emph{Appl. Math. Optim.} {\bf 91} (2025), no.~1, Paper No. 25, 75 pp.
			
			
			
			
			
			
			\bibitem{KWH}	K.~W. Hajduk and J.~C. Robinson, Energy equality for the 3D critical convective Brinkman-Forchheimer equations, \emph{J. Differential Equations} {\bf 263} (2017), no.~11, 7141--7161.
			
			
			\bibitem{MHe} M. Hieber et al., $L^r$-Helmholtz-Weyl decomposition for three dimensional exterior domains, \emph{J. Funct. Anal.} {\bf 281} (2021), no.~8, Paper No. 109144, 52 pp.
			
			
				\bibitem{RAJPN-00} R.~A. Johnson, P. Nistri and M.~I. Kamenskiui, On the existence of periodic solutions of the Navier-Stokes equations in a thin domain using the topological degree, \emph{J. Dynam. Differential Equations}, {\bf 12} (2000), no.~4, 681--712.
			
			
			\bibitem{KT2}  V. K. Kalantarov and S. Zelik, Smooth attractors for the Brinkman-Forchheimer equations with fast growing nonlinearities, \emph{Commun. Pure Appl. Anal.} {\bf 11}	(2012) 2037--2054.
			
			
		\bibitem{HK-97}	H. Kato, Existence of periodic solutions of the Navier-Stokes equations, \emph{J. Math. Anal. Appl.} {\bf 208} (1997), no.~1, 141--157.
			
			
			 \bibitem{HKTY}   H. Kozono and T. Yanagisawa, 
			$L^r$-variational inequality for vector fields and the Helmholtz-Weyl decomposition in bounded domains,
			\emph{Indiana Univ. Math. J.} {\bf  58} (2009),  no.~4, 1853--1920.
			
		\bibitem{HKYMRT-14}	H. Kozono, Y. Mashiko and R. Takada, Existence of periodic solutions and their asymptotic stability to the Navier-Stokes equations with the Coriolis force, \emph{J. Evol. Equ.}, {\bf 14} (2014), no.~3, 565--601.
			
		\bibitem{MK-14}	M. Kyed, The existence and regularity of time-periodic solutions to the three-dimensional Navier-Stokes equations in the whole space, \emph{Nonlinearity}, {\bf 27} (2014), no.~12, 2909--2935
			
			
			
			
			
			
			
			
			
			\bibitem{DL-90} D. Lauerov\'a, The Rothe method and time periodic solutions to the Navier-Stokes equations and equations of magnetohydrodynamics, \emph{Apl. Mat.}, {\bf 35} (1990), no.~2, 89--98.
			
			
			
			
			
			
		\bibitem{lions-1}	J.-L. Lions, \emph{Quelques M\'ethodes de R\'esolution des Probl\`emes aux Limites Non lin\'eaires}, Dunod, Paris, 1969 Gauthier-Villars, Paris, 1969
			
			
		\bibitem{JLL+EM-I-72}	J.-L. Lions and E. Magenes, {\it Non-homogeneous Boundary Value Problems and Applications. Vol. I}, Springer, New York-Heidelberg, 1972.
			
			
			
			
			%
			%
			%
			%
			%
			%
			
			
			
		\bibitem{PMMP-99}	P. Maremonti and M. Padula,
			Existence, uniqueness and attainability of periodic solutions of the Navier-Stokes equations in exterior domains, \emph{J. Math. Sci.} {\bf 93}  (1999), no.~5,  719--746.
			
			\bibitem{PAM}	P. A. Markowich, E. S. Titi and S. Trabelsi,	Continuous data assimilation for the three-dimensional Brinkman-Forchheimer-extended Darcy model, \emph{Nonlinearity} {\bf 29} (2016), no.~4, 1292--1328.
			
			
			
			
			
			\bibitem{MTM8}  M. T. Mohan, 
			Well-posedness and asymptotic behavior of stochastic convective Brinkman-Forchheimer equations perturbed by pure jump noise,
			\emph{Stoch. Partial Differ. Equ. Anal. Comput.} {\bf 10}  (2022), no.~2, 614--690. 
			
%
%
			
			
			\bibitem{HMo}	H. Morimoto, 	Time periodic Navier-Stokes flow with nonhomogeneous boundary condition, \emph{J. Math. Sci. Univ. Tokyo} {\bf 16} (2009), no.~1, 113--123.
			
			
			
		\bibitem{TN-25}	T. Nakatsuka, Existence of time-periodic strong solutions to the Navier-Stokes equation in the whole space, \emph{J. Math. Anal. Appl.},  {\bf 543} (2025), no.~2, Paper No. 128991, 24 pp.
			
			
		\bibitem{ANFM-25}	A. Novruzi and F. Mezatio, Existence and uniqueness of a time-periodic strong solution to incompressible Navier-Stokes equations in a time-periodic moving domain, describing the blood flow in an artificial heart, \emph{J. Math. Anal. Appl.}, {\bf 548} (2025), no.~2, Paper No. 129410, 28 pp.
			
			
		\bibitem{GPr}	G. Prouse, Soluzioni periodiche dell'equazione di Navier-Stokes, \emph{Atti Accad. Naz. Lincei Rend. Cl. Sci. Fis. Mat. Nat. (8)} {\bf 35} (1963), 443--447
			
			
			
			
			
			
		
			
			
			
			\bibitem{RS-94-1} R. Salvi, On the existence of periodic weak solutions of Navier-Stokes equations in regions with periodically moving boundaries, \emph{Acta Appl. Math.},  {\bf 37} (1994), no.~1-2, 169--179.
			
			\bibitem{RS-94} R. Salvi, On the existence of periodic weak solutions on the Navier-Stokes equations in exterior regions with periodically moving boundaries, in {\it Navier-Stokes Equations and Related Nonlinear Problems (Funchal, 1994)}, 63--73, Plenum, New York.
			
			
			
			  
				\bibitem{CSHS}	C. Simader and H. Sohr,  A new approach to the Helmholtz decomposition and the Neumann problem in $L^q$-spaces for bounded and exterior domains, \emph{Mathematical Problems Relating to the Navier-Stokes Equation}, 1-35,	Ser. Adv. Math. Appl. Sci., 11, World Sci. Publ., River Edge, NJ, 1992.
			
			\bibitem{SJ}  J. Simon, Compact sets in the space $L^p(0,T;B)$, \emph{Ann. Mat. Pura Appl. (4)} {\bf 146} (1987), 65--96.
			
			
%
%

\bibitem{HSo}  H. Sohr, \emph{The Navier-Stokes Equations}, Birkh\"auser Advanced Texts: Basler Lehrb\"ucher, Birkh\"auser, Basel, 2001
			
			
			
			
			
			
			\bibitem{Te} R.~M. Temam, \emph{Navier-Stokes Equations}, third edition, 
			Studies in Mathematics and its Applications, 2, North-Holland, Amsterdam, 1984.
			
			
			
			
			
			
			
			
			
			\bibitem{ZZXW}	Z. Zhang, X. Wu and M. Lu, On the uniqueness of strong solution to the incompressible Navier-Stokes equations with damping, \emph{J. Math. Anal. Appl.} {\bf 377} (2011), no.~1, 414--419.
			
			
			
			
		\end{thebibliography}
	\end{document}